\numberwithin{equation}{section}
\newtheorem{theorem}{Theorem}[section]
\newtheorem{corollary}[theorem]{Corollary}
\newtheorem{lemma}[theorem]{Lemma}
\newtheorem{remark}[theorem]{Remark}
\newtheorem{definition}[theorem]{Definition}
\numberwithin{figure}{section}
\title[Asymptotics of $q$-orthogonal polynomials]{Asymptotic behaviours of $q$-orthogonal polynomials from a $q$-Riemann Hilbert Problem}
\author{Nalini Joshi}
\email{nalini.joshi@sydney.edu.au}
\address{School of Mathematics and Statistics F07, University of Sydney, Sydney NSW 2006, Australia, ORCID ID: 0000-0001-7504-4444}
\author{Tomas Lasic Latimer}
\email{tlas5434@uni.sydney.edu.au}
\address{School of Mathematics and Statistics F07, University of Sydney, Sydney NSW 2006, Australia, ORCID ID: 0000-0001-6859-7788}
\date{}
\begin{document}
\begin{abstract}
     We describe a Riemann-Hilbert problem for a family of $q$-orthogonal polynomials, $\{ P_n(x) \}_{n=0}^\infty$, and use it to deduce their asymptotic behaviours in the limit as the degree, $n$, approaches infinity. We find that the $q$-orthogonal polynomials studied in this paper share certain universal behaviours in the limit $n\to\infty$. In particular, we observe that the asymptotic behaviour near the location of their smallest zeros, $x \sim q^{n/2}$, and norm, $\|P_n\|_2$, are independent of the weight function as $n\to\infty$.
\end{abstract}

\keywords{Riemann-Hilbert Problem, $q$-orthogonal polynomials and $q$-difference calculus. MSC classification: 33C45, 35Q15, 39A13. }

\maketitle
\tableofcontents

\section{Introduction}
The theory of orthogonal polynomials is a source of major developments in modern mathematical physics. But the spectacular outcomes of the classical theory of orthogonal polynomials with continuous measure are not yet matched by $q$-orthogonal polynomials, which are orthogonal with respect to a discrete measure supported on a lattice $\{\pm q^k\}_{k\in \mathbb{N}^+}$, for some $0<q<1$. In this paper, we focus on a class of such polynomials and deduce their asymptotic behaviours as their degree grows by expressing them in terms of a Riemann-Hilbert Problem (RHP). 

We denote monic $q$-orthogonal polynomials by $\bigl\{ P_{n}(x)\bigr\}_{n=0}^\infty$. They satisfy the orthogonality relation
\begin{equation}\label{0th orthogonality}
    \int^1_{-1} P_{n}(x)P_{m}(x) w(x) d_qx = \gamma_{n}\delta_{n,m} ,
\end{equation} 
where $d_qx$ refers to the (discrete) Jackson integral (see Equation \eqref{Jackson}). A fundamental consequence of Equation \eqref{0th orthogonality} is the 3-term recurrence relation
\begin{equation}\label{recurrence}
    xP_{n}(x) = P_{n+1}(x)+b_{n}P_{n}(x)+a_{n}P_{n-1}(x) \,,
\end{equation}
where the recurrence coefficients are given by 
\begin{eqnarray*}
a_{n} = \dfrac{\gamma_{n}}{\gamma_{n-1}} ,\ 
b_{n} = \dfrac{\int xP_{n}(x)^{2} d\mu(x)}{\gamma_{n}}.
\end{eqnarray*}

A natural question, which motivated many studies through the past century \cite{szeg1939orthogonal,Deift1999strong,ismail2005asymptotics}, is to ask what the behaviours of $P_n, \gamma_n, a_n$ and $b_n$ are as $n\rightarrow \infty.$ Classical results focused on polynomials  which satisfy the orthogonality relation
\begin{equation}\nonumber
    \int P_{n}(x)P_{m}(x) w(x) d\mu(x) = \gamma_{n}\delta_{n,m} ,
\end{equation}
where $d\mu(x)$ is a continuous measure on the real line and $w(x)$ is a weight function whose rate of change satisfies certain conditions. Measures on the unit circle in the complex plane were also a focus of interest \cite{szeg1939orthogonal,simon2004orthogonal}. Extensions to a wider class of weight functions, leading to so-called \textit{semi-classical} orthogonal polynomials \cite{shohat1939,freud1976coefficients}, have attracted more attention in recent times, due to their appearance in Random Matrix Theory \cite{mehtaRMT,D99} and relationship to the Painlev\'e equations \cite{fokas1991discrete}.  

More recently, discrete orthogonal polynomials have been of growing interest. For those on a multiplicative $q$-lattice, particular attention has been paid to cases such as little $q$-Jacobi and discrete $q$-Hermite \rm{I} \cite[Chapter 18.27]{NIST:DLMF} polynomials. However, very little appears to be known about the asymptotic behaviour of $q$-orthogonal polynomials outside these specific cases. More recent discoveries of $q$-orthogonal polynomials related to multiplicative discrete Painlev\'e equations \cites{filipuk2018discrete,Boelen} have reignited a need for further mathematical tools to answer questions about their asymptotic behaviours. 

The aim of this paper is to consider such questions for a general class of $q$-orthogonal polynomials which includes, but is not limited to, a large subset of the $q$-Hahn class \cite[Chapter 18.27]{NIST:DLMF}. Our main results are Theorems \ref{main result 1} and \ref{main result 2}, where, under certain mild assumptions on the weight $w(x)$ in Equation \eqref{0th orthogonality}, we deduce the asymptotic behaviour of $P_n$, $\gamma_n$ and $a_n$ in the limit $n\to \infty$ and show that the error term is of size $O(q^n)$. 

\subsection{Background}
In the past two decades, $q$-orthogonal polynomials have appeared in many areas of applied mathematics and physics \cites{atakishiyev2008discrete,jafarov2010quantum,sasaki2009exactly, knizel2016moduli}, particularly in quantum physics. However, little is known about the behaviour of $q$-orthogonal polynomials. Earlier work in the field focused on specific examples. In 2003, Postelmans and Van Assche introduced two kinds of multiple little $q$-Jacobi polynomials and described some asymptotic properties \cite{postelmans2005multiple}. In 2005, Ismail described the asymptotic behaviour of discrete $q$-Hermite \rm{II} polynomials using $q$-Airy functions. He then extended these results to $q$-orthogonal polynomials satisfying a certain $q$-difference equation \cite{ismail2005asymptotics}. In 2013, Driver and Jordaan studied the asymptotic behaviour of extreme zeros of $q$-orthogonal polynomials \cite{driver2013inequalities}. In 2017, Chen and Filipuk studied generalised $q$-Laguerre polynomials and determined resulting asymptotics for their recurrence coefficients \cite{chen2017nonlinear}.

Recently, there has been improved understanding of the asymptotics of larger families of $q$-orthogonal polynomials. In 2020, Van Assche \textit{et al.} \cite{van2020zero} showed that the leading order of $\gamma_n$ is $q^{n^{2}}$ for $q$-orthogonal polynomials with weights satisfying 
\[ \lim_{n\to \infty}\frac{1}{n^2}\text{log}(w(x^n)) = 0,\]
where $x\in (0,1)$. They also describe the location of $\{x_i^{1/n}\}_{i=1}^n$ (where $\{x_i\}_{i=1}^n$ are the $n$ zeros of $P_n$) in the limit $n\rightarrow \infty$. However, it remained an open question to obtain a more precise asymptotic description of large classes of $q$-orthogonal polynomials. 

RHPs have been extensively used to study the asymptotics of orthogonal polynomials \cites{kuijlaars2003riemann,etna_vol25_pp369-392} since the asymptotic behaviour of semi-classical Freudian polynomials was derived by Deift \textit{et al.} using a model RHP \cite{Deift1999strong}. Their work was based on earlier advancements by Deift and Zhou on the steepest descent method for oscillatory RHPs \cite{deift1993steepest}. Expanding on the approach of Deift \textit{et al.}, Baik \textit{et al.} \cite{baik2007discrete} deduced the asymptotics of orthogonal polynomials on a discrete lattice using what they call an interpolation problem, which can be seen as the discrete analogue to a RHP. Although this work yielded interesting results for general discrete weights, we find that it misses some key details of the behaviour of $q$-orthogonal polynomials. In particular, the results do not accurately describe the behaviour of $a_n$, $\gamma_n$ and $P_n$ as $n\rightarrow \infty$. We note that in terms of Baik \textit{et al.}'s notation, $0$ is an accumulation point of the $q$-lattice.

Extending on our earlier theory \cite{qRHP}, in this paper we will use a RHP to obtain detailed asymptotic results for a large class of $q$-orthogonal polynomials. We will observe an interesting intersection with $q$-RHP theory and provide explicit examples highlighting aspects of $q$-RHP theory discussed in the literature \cite{ramis2013local,sauloy2003galois,Adams}. In particular, we will solve the model $q$-RHP by deducing an equivalent connection matrix between two solutions of a $q$-difference equation represented by a power series about $0$ and $\infty$. 

The asymptotic results obtained in this paper also pertain to multiplicative discrete Painlev\'e equations. It has been shown that the recurrence coefficients of $q$-orthogonal polynomials can satisfy multiplicative-type discrete Painlev\'e equations \cite{Boelen}, for example Equation \eqref{multi pain}
 \begin{equation}\label{multi pain}
     a_n(a_{n+1} + q^{1-n}a_n+q^2a_{n-1} + q^{3-2n}a_{n+1}a_{n}a_{n-1}) = q^{n-1}(1-q^n),
 \end{equation}
 where the non-autonomous term in the equation is iterated on multiplicative lattices. (For the terminology distinguishing types of discrete Painlev\'e equations, we refer to Sakai \cite{s:01}.) 

Very little is known about the asymptotic behaviour of the solution to this equation. The results in this paper provide detailed asymptotics for the real positive solution to Equation \eqref{multi pain}.

\subsection{Notation and previous results in the literature}
For completeness, we recall some well known definitions and notations from the calculus of $q$-differences. These definitions can be found in \cite{Ernst2012}. Throughout the paper we will assume $q \in \mathbb{R}$ and $0<q<1$.
\begin{definition}
We define the Pochhammer symbol $(z;q)_{\infty}$, and Jackson integrals as follows.
\begin{enumerate}
\item The Pochhammer symbol $(z;q)_{\infty}$ is defined as
\begin{equation*}
    (z;q)_{\infty} = \prod_{j=0}^{\infty}(1-zq^{j}) \,.
\end{equation*}
Furthermore, we define $(z_1,z_2;q)_{\infty}$ as 
\begin{equation*}
    (z_1,z_2;q)_{\infty} = \prod_{j=0}^{\infty}(1-z_1q^{j})(1-z_2q^{j}) \,.
\end{equation*}
\item The unnormalised Jackson integral of $f(z)$ from -1 to 1 is defined as
\begin{equation}\label{Jackson}
    \int_{-1}^{1}f(z)d_{q}z = \sum_{k=0}^{\infty} (f(q^{k})+f(-q^{k}))q^{k} \,.
\end{equation}
\end{enumerate}
\end{definition}

We remark on an equivalence between two types of $q$-orthogonal polynomials seen in the literature.

\begin{remark}\label{201}
In general $q$-orthogonal polynomials can be orthogonal with respect to a weight supported on the Jackson integral from $[-1,1]$ or from $(0,1]$ \cite[Chapter 18.27]{NIST:DLMF}, where the latter is given by 
\begin{equation}\nonumber
    \int_{0}^{1}f(z)d_{q}z = \sum_{k=0}^{\infty} f(q^{k})q^{k} \,.
\end{equation}
Let $\{ P_n \}_{n=0}^\infty$ be the normalised polynomials orthogonal with respect to the one-sided Jackson integral
\begin{equation*}
    \sum_{k=0}^{\infty} P_n(q^k)P_m(q^k)w(q^k)q^k = \delta_{n,m}.
\end{equation*}
Let $\rho=q^{1/2}$, hence
\begin{equation}\label{one way orthogonality}
    \sum_{k=0}^{\infty} P_n(\rho^{2k})P_m(\rho^{2k})w(\rho^{2k})\rho^{2k} = \delta_{n,m}.
\end{equation}
Define $\omega(z) = w(z^2)|z|$, it follows that $\omega(z)$ is an even function. This implies that the corresponding set of orthogonal polynomials $\{Q_n(z)\}_{n=0}^\infty$ are even/odd for even/odd $n$ \cite{ismail2005classical}. Thus, for positive integers $l$, $p$ the orthogonality condition for $\{Q_n(z)\}_{n=0}^\infty$ is given by 
\begin{equation}\label{two way orthogonality}
    2\sum_{k=0}^{\infty} Q_{2l}(\rho^{2k})Q_{2p}(\rho^{2k})\omega(\rho^k)\rho^{k} = \delta_{l,p},
\end{equation}
which is equivalent to Equation \eqref{one way orthogonality} $($up to scaling of the normalisation factor by $\sqrt{2})$. Hence, we proved that the class of $q$-orthogonal polynomials with one-sided Jackson integrals are contained in the class of $q$-orthogonal polynomials with two-sided Jackson integrals. It follows that it is sufficient to study $q$-orthogonal polynomials with two-sided Jackson integrals.
\end{remark}

We recall the definition of an {\em appropriate} Jordan curve and {\em admissible} weight function given in \cite[Definition 1.2]{qRHP} (with slight modification).
\begin{definition}\label{admissable}
A positively oriented Jordan curve $\Gamma$ in $\mathbb C$ with interior $\mathcal D_-\subset\mathbb C$ and exterior $\mathcal D_+\subset\mathbb C$ is called {\em appropriate} if 
\[
\pm q^k\in \begin{cases}
&\mathcal D_- \quad {\rm if}\, k\ge 0,\\
&\mathcal D_+ \quad {\rm if}\, k< 0.
\end{cases}
\]
A weight function, $w(z)$, is called {\em admissible} if there exists an appropriate Jordan curve, $\Gamma$, such that $w(z)$ is bounded on $\Gamma$, $w(z)$ is analytic in $\mathcal D_-$, and, there exists constants $N_c$ and $c$ such that for $n>N_c$
\[ |1-w(\pm q^{n/2})| < cq^{n}.\]
Furthermore, we require that
\[ w(\pm q^k) \neq 0,\; {\rm for}\,k\in\mathbb{N}_0.\]
\end{definition}

We define the function 
\begin{equation}\label{h new def}
h^\alpha(z) = \sum_{k=-\infty}^{\infty} \frac{2zq^{k(1+\alpha)}}{z^{2}-q^{2k}}  = \sum_{k=-\infty}^{\infty} \left( \frac{q^{k(1+\alpha)}}{z-q^{k}} + \frac{q^{k(1+\alpha)}}{z+q^{k}} \right),
\end{equation}
which satisfies the $q$-difference equation 
\begin{equation}\label{h alpha diff}
    h^\alpha(qz) = q^{\alpha}h^\alpha(z).
\end{equation}
Note that $h^0(z)$ is equivalent to the definition of $h(z)$ given in \cite[Definition 1.3]{qRHP}. Consequently, we define the $q$-RHP:
\begin{definition}[$q$-RHP]\label{qRHP def}
Let $\Gamma$ be an appropriate curve (see Definition \ref{admissable}) with interior $\mathcal D_-$ and exterior $\mathcal D_+$, and $w(z)$ be a corresponding admissible weight. A $2\times 2$ complex matrix function $Y_n(z)$, $z\in\mathbb C$, is a solution to the $q$-RHP if it satisfies the following conditions:
    \begin{enumerate}[label={{\rm (\roman *)}}]
    \item $Y_n(z)$ is analytic on $\mathbb{C}\setminus \Gamma$.
\item $Y_n(z)$ has continuous boundary values $Y_n^-(s)$ and $Y_n^+(s)$ as $z$ approaches $s\in\Gamma$ from $\mathcal D_-$ and $\mathcal D_+$ respectively, where
\begin{subequations}
\begin{gather} \label{b}
 Y_n^+(s) =
  Y_n^-(s)
  \begin{pmatrix}
   1 &
   w(s)h^\alpha(s) \\
   0 &
   1 
   \end{pmatrix}, \; s\in \Gamma .
\end{gather}
\item $Y_n(z)$ satisfies
\begin{gather} \label{c}
Y_n(z)\begin{pmatrix}
   z^{-n} &
   0 \\
   0 &
   z^{n} 
   \end{pmatrix}
 =
I + O\left( \frac{1}{|z|} \right), \text{ as }\ |z| \rightarrow \infty.
\end{gather}
\end{subequations}
\end{enumerate}
\end{definition}

Following the arguments presented in \cite[Section 2(a)]{qRHP} we deduce that the unique solution to the $q$-RHP given by Definition \ref{qRHP def} is 
\begin{gather} \label{RHP sol}
Y_n(z) 
=
\begin{bmatrix}
   P_{n}(z) &
   \oint_{\Gamma}\frac{P_{n}(s)w(s)h^\alpha(s)}{2\pi i (z-s)}ds \\
   \gamma_{n-1}^{-1} P_{n-1}(z) &
   \oint_{\Gamma}\frac{P_{n-1}w(s)h^\alpha(s)}{ 2\pi i (z-s)\gamma_{n-1}}ds
   \end{bmatrix},
\end{gather}
where $\{P_{n}(z)\}_{n=0}^\infty$ is the family of monic $n^{th}$-degree orthogonal polynomials such that
\[ \int_{-1
}^{1} P_n(z)P_m(z)w(z)|z|^{\alpha}d_{q}z = \gamma_n\delta_{n,m} . \]

\subsection{Main results}
We are now in a position to state the main results of this paper, which are collected as Theorems \ref{main result 1} and \ref{main result 2}. The first main result concerns the asymptotic behaviour of orthogonal polynomials as their degree approaches infinity.
\begin{theorem}\label{main result 1}
Suppose that $\{P_n(z)\}_{n=0}^\infty$ is a family of monic $q$-orthogonal polynomials, orthogonal with respect to the weight $|z|^{\alpha}w(z)d_qz$, where $\alpha \in (-1,\infty)$ and $w(z)$ is an admissible weight function. Define $t = zq^{-n/2}$. Then for any given positive integer $m$ there exists an $N_m\in\mathbb N$ and $C(m)$ such that for even $n>N_m$, we have
\begin{eqnarray*}
   |(-1)^\frac{n}{2} q^{-\frac{n}{2}(\frac{n}{2}-1)}P_n(z)- \psi(t)| &<& C(m)q^n, \qquad \;\;\; \text{for } |t|\leq q^{-m-1/2}, \\
   |P_n(z)- z^{n}(z^{-2};q^2)_\infty|&<& C(m)q^{n},\qquad \; \; \, \text{for } 1\geq |z|> q^{n/2-m-1/2},
\end{eqnarray*}
and
\begin{eqnarray*}
   |(-1)^\frac{n}{2} q^{\frac{n}{2}(\frac{n}{2}-1+\alpha)}\gamma_{n-1}^{-1}P_{n-1}(z)- \varphi(t)| &<& C(m)q^n, \qquad \;\;\; \text{for } |t|\leq q^{-m-1/2}, \\
   |P_{n-1}(z)- z^{n-1}(z^{-2};q^2)_\infty|&<& C(m)q^{n},\qquad \; \; \, \text{for } 1\geq |z|> q^{n/2-m-1/2}.
\end{eqnarray*}
where $C(m)$ is a function of $m$, independent of $z,n$, and $\psi(t)$ and $\varphi(t)$, given in Definition \ref{hat tilde definition}, are entire functions independent of $n,m$ .
\end{theorem}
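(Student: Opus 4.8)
The plan is to run a Deift--Zhou-type steepest-descent analysis adapted to the $q$-lattice, starting from the exact representation \eqref{RHP sol} of $Y_n$ in terms of $P_n$, $P_{n-1}$ and their Cauchy transforms. The essential structural input is the exact $q$-homogeneity $h^\alpha(qz)=q^\alpha h^\alpha(z)$ recorded in \eqref{h alpha diff}: scaling $z$ by the integer power $q^{n/2}$ (recall $n$ is even) merely rescales $h^\alpha$ by $q^{n\alpha/2}$, so the scaled variable $t=zq^{-n/2}$ turns the $q$-RHP of Definition \ref{qRHP def} into a nearly $n$-independent problem. First I would introduce a normalising transformation $Y_n\mapsto T_n$ that removes the $z^n$ growth prescribed by \eqref{c} and factors out the candidate outer function $z^n(z^{-2};q^2)_\infty$; this is the natural $q$-analogue of the $g$-function step, and the zeros $\pm q^{\,j}$ of $(z^{-2};q^2)_\infty$ encode the bulk distribution of zeros, while the innermost zeros near $z\sim q^{n/2}$ are pushed to order one in the $t$ variable.

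The next step is to replace the weight $|z|^\alpha w(z)$ by its limiting profile. Because $w$ is admissible, $|1-w(\pm q^{n/2})|<cq^n$ in the sense of Definition \ref{admissable}, so on the rescaled contour the jump differs from the model jump $\begin{pmatrix}1 & h^\alpha\\ 0 & 1\end{pmatrix}$ only by terms of order $q^n$. I would then identify the limiting $q$-RHP obtained as $n\to\infty$ in the variable $t$ and solve it explicitly: as announced in the introduction, this is equivalent to computing the connection matrix relating the two solutions of the associated $q$-difference equation represented by a power series about $t=0$ and about $t=\infty$, whose entries are the entire functions $\psi$ and $\varphi$ of Definition \ref{hat tilde definition}. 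Matching this inner parametrix to the outer function $z^n(z^{-2};q^2)_\infty$ across the matching circle $|t|=q^{-m-1/2}$ then fixes the normalising prefactors $(-1)^{n/2}q^{-\frac n2(\frac n2-1)}$ and $(-1)^{n/2}q^{\frac n2(\frac n2-1+\alpha)}\gamma_{n-1}^{-1}$ appearing in the statement.

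With the global and local parametrices in hand, I would form the ratio $R_n=Y_n\Phi_n^{-1}$, where $\Phi_n$ is assembled from the outer solution away from the origin and the scaled model solution near it. The RHP data are arranged so that $R_n$ is analytic off a contour on which its jump is $I+\Delta_n$, and $\Delta_n$ has two sources: the deviation $1-w$ of the weight from its limit, bounded by $cq^n$ through admissibility, and the mismatch between inner and outer parametrices on the matching circle, controlled by the geometric decay of the tails of $h^\alpha$ and of the $q$-product. Each contributes at order $q^n$, with the $m$-dependence of the truncations collected into $C(m)$; a small-norm argument then gives $R_n=I+O(q^n)$ uniformly, and reading off the $(1,1)$ and $(2,1)$ entries of $Y_n=R_n\Phi_n$ in the two regions yields the four stated estimates.

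The main obstacle is the explicit resolution of the model $q$-RHP together with the proof that $\psi$ and $\varphi$ are genuinely entire: unlike the classical hard-edge problem, whose parametrix is built from Bessel functions, here one must analyse the connection problem for a $q$-difference equation whose solutions are $q$-special (theta- and $q$-Airy-type) functions, and verify that the connection matrix produces single-valued entire functions with exactly the growth needed to match $z^n(z^{-2};q^2)_\infty$. A secondary difficulty is uniformity: since $0$ is an accumulation point of the lattice, the matching region and the small-norm estimate must be controlled uniformly as the contour is pushed towards the origin, and it is precisely here that the $q$-homogeneity \eqref{h alpha diff} and the exponential sparsity of the lattice must be exploited to keep every error at $O(q^n)$.
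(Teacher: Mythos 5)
Your overall architecture matches the paper's: rescale by $q^{n/2}$ using the $q$-homogeneity of $h^\alpha$, pass to a limiting model $q$-RHP, solve it via the connection matrix between power-series solutions of a $q$-difference equation at $0$ and $\infty$ (this is exactly how $\psi$ and $\varphi$ arise in Definition \ref{hat tilde definition} and Section \ref{model RHP soln}), and compare the finite-$n$ solution to the model through a ratio RHP. The genuine gap is in your final step, namely the claim that the two error sources ``each contribute at order $q^n$'' so that ``a small-norm argument then gives $R_n=I+O(q^n)$ uniformly.'' That claim fails, and the paper's proof does something essentially different there.

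The failure is caused by the pole structure that your parametrix does not reproduce. After the transformations, the finite-$n$ solution $W_n$ carries residue conditions at the lattice points $t=\pm q^{-k}$ for $1\le k\le n/2$ (Equation \eqref{Y3 res}), while the model solution $\mathcal{W}$ has poles at \emph{all} $\pm q^{-k}$, $k\ge 1$. Consequently the ratio $R$ cannot be analytic in the annulus between the matching circle $|t|=q^{-m-1/2}$ and infinity unless one excises every lattice point there by a small circle and redefines $R$ inside each one, which is precisely the three-part contour $\Gamma_R=\Gamma_1\cup\Gamma_2\cup\Gamma_3$ of Definition \ref{gamma definition}. On those extra circles the jump is \emph{not} $I+O(q^n)$: near $t=\pm q^{-k}$ with $k$ just above the fixed integer $m$, the deviation of $\widetilde{\mathcal{W}}$ from $I$ is only $O(q^{m})$ --- a constant in $n$ --- and for $|t|\sim q^{-n/2}$ both $g_n(t)/g(t)-1$ and $w(tq^{n/2})-1$ are $O(1)$; accordingly the paper proves only $\Vert J_2-I\Vert_{\Gamma_2}<1/2$ and $\Vert J_3-I\Vert_{\Gamma_3}=O(q^{n/2})$ (Lemma \ref{JR goes to the identity}). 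A uniform small-norm estimate over $\Gamma_R$ therefore yields at best $R=I+O(q^{m})$, which does not even decay as $n\to\infty$ for fixed $m$, let alone at the rate $q^n$ demanded by the theorem. The paper closes this gap by a residue-summation mechanism rather than a norm bound: the jump matrices extend meromorphically with simple poles exactly on the lattice, so the Cauchy-integral representation of $\widetilde{R}$ collapses to a sum of residues, and the sum of the absolute values of these residues is $O(q^n)$ because each residue carries the factor $g_n(t)^{-2}h^\alpha(t)^{-1}w(tq^{n/2})^{-1}$, which is vanishingly small at the relevant points (Remark \ref{remark 2.11} and the proof of Lemma \ref{R close to identity theorem}). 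That argument is the one essential ingredient missing from your proposal; without it the estimate $R_n=I+O(q^n)$, and hence all four inequalities of the theorem, cannot be obtained.
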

Our second main result concerns the asymptotic behaviour of recurrence coefficients and $L_2$ norm of $P_n$ as $n$ approaches infinity.
\begin{theorem}\label{main result 2}
Under the same hypotheses as Theorem \ref{main result 1} we have, for even $n\in\mathbb{N}$, as $n\to\infty$:
\begin{eqnarray*}
   \gamma_{n} &=& q^{n(n-1+\alpha)/2}\left( 2(q^2;q^2)_\infty^2 + O(q^n)\right),\\
   \gamma_{n-1} &=& q^{\frac{n-2}{2}(n-1+\alpha)}\left( 2(q^2;q^2)_\infty^2 + O(q^n)\right),\\
   a_n &=& q^{n -1 + \alpha}(1+O(q^n)).
\end{eqnarray*}

where $\gamma_n$ and $a_n$ are defined in Equations \eqref{0th orthogonality} and \eqref{recurrence} respectively.
\end{theorem}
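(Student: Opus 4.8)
The plan is to reduce everything to the two leading norms, since $a_n=\gamma_n/\gamma_{n-1}$ by the formula in \eqref{recurrence}, and to read each norm off from the discrete orthogonality relation. Writing the Jackson integral with weight $|z|^\alpha w(z)$ as a lattice sum, I would start from
\[
\gamma_n=\sum_{k=0}^{\infty}\bigl(P_n(q^k)^2w(q^k)+P_n(-q^k)^2w(-q^k)\bigr)q^{k(1+\alpha)},
\]
together with the analogous identity for $\gamma_{n-1}$, and feed into them the uniform asymptotics of Theorem \ref{main result 1}. The decisive structural fact is that, after the rescaling $t=zq^{-n/2}$, the summand concentrates near the smallest zeros $z\sim q^{n/2}$, i.e. at $t\sim1$, which is exactly where $\psi$ (for $P_n$) and $\varphi$ (for $\gamma_{n-1}^{-1}P_{n-1}$) control the polynomials.

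Setting $z=q^k$ and $j=k-n/2$, Theorem \ref{main result 1} gives $P_n(q^k)=(-1)^{n/2}q^{\frac n2(\frac n2-1)}\bigl(\psi(q^{j})+O(q^n)\bigr)$ in the transition window, and admissibility replaces $w(\pm q^k)$ by $1$ up to $O(q^n)$ there; the lattice points $\pm q^k$ contribute equally in the scaling limit (by the $z\mapsto-z$ symmetry and the parity of $\psi$), which supplies the factor $2$. Substituting and extracting the common power of $q$ collapses the sum to
\[
\gamma_n=q^{n(n-1+\alpha)/2}\Bigl(\,\textstyle\sum_{j\in\mathbb Z}2\,\psi(q^{j})^2q^{j(1+\alpha)}+O(q^n)\Bigr).
\]
For $\gamma_{n-1}$ I would run the same localisation, but starting from the exact normalisation $\int\bigl(\gamma_{n-1}^{-1}P_{n-1}\bigr)P_{n-1}\,|z|^\alpha w\,d_qz=1$ and inserting the companion limit $\varphi$; since the left-hand side is the \emph{known} value $1$, solving the resulting relation \emph{for} $\gamma_{n-1}$ produces the same type of bilateral $q$-sum in $\varphi$ and pins down the power $q^{\frac{n-2}{2}(n-1+\alpha)}$.

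It then remains to evaluate the two bilateral sums in closed form. Using the explicit expressions for $\psi$ and $\varphi$ from Definition \ref{hat tilde definition} — $q$-series of theta/$q$-Airy type — I expect both sums to collapse to the universal, weight-independent constant $2(q^2;q^2)_\infty^2$. Granting this, the ratio is clean: the same constant cancels, the exponents combine as $\tfrac n2(n-1+\alpha)-\tfrac{n-2}2(n-1+\alpha)=n-1+\alpha$, and one obtains $a_n=\gamma_n/\gamma_{n-1}=q^{n-1+\alpha}\bigl(1+O(q^n)\bigr)$, matching the stated results.

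The hard part is the rigorous control of the sum \emph{away} from the transition region $t\sim1$. The upper tail ($k\gg n/2$, $t\to0$) is harmless, since $1+\alpha>0$ forces $q^{j(1+\alpha)}\to0$ while $\psi$ stays bounded. The dangerous part is the lower tail, namely the lattice points $q^k$ of order $1$: there the outer approximation $z^{n}(z^{-2};q^2)_\infty$ vanishes identically (every $\pm q^k$ with $k\ge0$ is one of its zeros), so the additive bound of Theorem \ref{main result 1} gives only $|P_n(q^k)|\le C(m)q^n$, which is too weak — taken at face value it would contribute $O(q^{2n})$, dominating the claimed leading term $q^{n(n-1+\alpha)/2}$ for large $n$. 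To close this gap I would exploit the super-exponential (Gaussian-in-$\log z$) decay of the local model $\psi$, equivalently a sharp estimate of the form $|P_n(q^k)|\le C\,q^{n(n-1)/2}$ at the extreme lattice points, obtained by pushing the local approximation to a growing window $|t|\le q^{-m(n)-1/2}$ with $m(n)\to\infty$ and balancing the growth of $C(m)$ against the decay of $\psi$. Establishing this sharp far-field bound, and carrying out the closed-form evaluation of the constant, are the two genuinely nontrivial steps; the remaining bookkeeping of powers of $q$ is routine.
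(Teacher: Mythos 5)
Your route --- expanding the Jackson sum for $\gamma_n$, localising it at the scale $z\sim q^{n/2}$, and inserting the local models $\psi$, $\varphi$ from Theorem \ref{main result 1} --- is genuinely different from the paper's, but it has a gap that you correctly flag and then underestimate: with the bounds actually available the argument fails, and the repair you sketch cannot be carried out from the paper's stated results. At the lattice points $z=\pm q^k$ with $0\le k\le n/2-m-1$ the outer approximation $z^n(z^{-2};q^2)_\infty$ vanishes identically, so Theorem \ref{main result 1} gives only $|P_n(\pm q^k)|\le C(m)q^n$, hence a lower-tail contribution bounded by $O(q^{2n})$. Since $q^{2n}\gg q^{n(n-1+\alpha)/2}$ for large $n$, this bound exceeds the quantity being computed by a super-exponentially large factor, let alone fitting inside a relative error $O(q^n)$; so as you say, the additive error is fatally weak there. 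Your proposed fix --- taking a growing window $m=m(n)\to\infty$ and ``balancing the growth of $C(m)$ against the decay of $\psi$'' --- is not available: Theorem \ref{main result 1} and Lemmas \ref{JR goes to the identity}, \ref{R close to identity theorem} hold only for $n>N_m$, with no control whatsoever on how $N_m$ and $C(m)$ grow with $m$, so a window growing with $n$ requires redoing the entire RHP estimate with uniformity in $m$, not bookkeeping. What you actually need is a far-field lattice bound of strength roughly $|P_n(\pm q^k)|\le Cq^{n(n-1)/2}$; the decay relations \eqref{bar a} and \eqref{ag} for the model solution make this plausible, but nothing in the paper (or your proposal) proves it for $P_n$ itself. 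The second open step, the evaluation $2\sum_{j\in\mathbb Z}\psi(q^j)^2q^{j(1+\alpha)}=2(q^2;q^2)_\infty^2$, is merely asserted; this one, however, has an easy repair in the spirit of the paper: since $\psi$ is weight-independent, the constant can be pinned down by specialising to discrete $q$-Hermite I, whose norms are known.

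For comparison, the paper's proof never touches the orthogonality sum, which is exactly why neither difficulty arises. From Equation \eqref{RHP sol}, $\gamma_n$ and $\gamma_{n-1}^{-1}$ appear (after the rescaling $\zeta_n=q^{-n\alpha/2}\gamma_n$) as the $(1,2)$ and $(2,1)$ entries of the $1/z$ coefficient of $Y_n(z)\,\mathrm{diag}(z^{-n},z^n)$ at infinity. The proof tracks this coefficient through the transformations $Y_n\to W_n$ of Section \ref{derivation section}, picking up the explicit factors $c_n^{\pm 2n}$ and $q^{n/2}$, and then invokes Lemma \ref{R close to identity theorem} in the large-$|t|$ regime --- precisely where the error is a clean $O(q^n)$ --- to replace the coefficient of $W_n$ by the off-diagonal entries of the expansion $\mathcal W(t)=I+t^{-1}\bigl(\begin{smallmatrix}0&B\\C&0\end{smallmatrix}\bigr)+O(t^{-2})$. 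The constants $B$ and $C$ are then fixed by the $q$-Hermite I case, and $a_n=\gamma_n/\gamma_{n-1}$ finishes the proof. If you wish to keep the norm-sum approach, the sharp far-field bound at the lattice points is the one genuinely new theorem you would have to supply.
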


\begin{remark}
Theorem \ref{main result 2} gives information about $a_n$ as $n\to \infty$, but the methodology we present in this paper does not provide a similar level of information about the asymptotic behaviour of $b_n$ as $n\to\infty$. The reason lies in the fact that the model RHP (see Section \ref{model RHP soln}) has a solution with an expansion of the form $\mathcal{W}(z) = I+\mathcal{W}^{(1)}/z+O(1/z^2)$, as $z\to\infty$, where $\mathcal{W}^{(1)}$ has zero main diagonal. This diagonal is where $b_n$ would typically appear and so our approach is only able to show that $b_n = o(q^n)$ as $n\to\infty$, without further information about the rate at which $b_n$ vanishes.
\end{remark}

\begin{remark}
Note that Theorems \ref{main result 1} and \ref{main result 2} do not require $w(z)$ to be positive in general.
\end{remark}

\begin{remark}
The case of little $q$-Jacobi polynomials \cite[Chapter 14.12]{koekoek2010hypergeometric} provides an illustration of Theorem \ref{main result 2}. This case has the orthogonality weight
\[ w(x) = |x|^{\alpha}(qx;q)_\infty/(bqx;q)_\infty, \]
and to leading order $\gamma_n$ is indeed independent of the parameter $b$ in the limit $n\to\infty$.
\end{remark}

\begin{remark} There is a number of generalisations one can make to the results in this paper that require a slight change in methodology and lead to slightly different final results.
\begin{enumerate}[label={\rm (\roman*)}]
\item In Definition \ref{admissable}, the condition on admissible weights that there exists constants $N_c$ and c such that 
\[ |1-w(\pm q^{n/2})| < cq^{n},\]
for $n>N_c$, can be relaxed to $w(0) = 1$. However, this could result in a change of the asymptotic error terms in Theorems \ref{main result 1} and \ref{main result 2} {\rm (}see the proof of Lemma \ref{JR goes to the identity}\/{\rm )}. In general, $C(m)q^n$ is the smallest upper bound achievable on the error.
\item  In Definition \ref{admissable}, the condition: $w(\pm q^k) \neq 0,\; {\rm for}\,k\in\mathbb{N}_0$ can be removed. Suppose that $w(q^j)=0$ for some $j\in\mathbb{N}_0$. Then, to compensate for this one has to change the function $f(z)$ defined in Equation \eqref{fz def} to 
\[f(z) = \prod_{j=0,j\neq i}^\infty ( 1-z^{-2}q^{2j}) .\]
\item The methodology presented in this paper can readily be extended to the Al-Salam Carlitz class of polynomials described in \cite[Chapter 18]{ismail2005classical}. In this case the discrete measure is supported on $\{q^k\} \cup \{-dq^k\}$ for $k\in\mathbb{N}_0$, where $d$ is a constant. To enable such an extension, we need new functions
\begin{eqnarray*}
h^{\alpha,d}(z) &=& \sum_{k=-\infty}^\infty \frac{q^k(1+d)z}{z^2+(d-1)zq^k-dq^{2k}} ,\\
f_d(z) &=& (z^{-1},q)_\infty(-dz^{-1},q)_\infty ,
\end{eqnarray*}
that replace $h^\alpha(z)$ and $f(z)$ respectively.
Furthermore, the $q$-difference equation
\begin{gather}\nonumber
S_A(qt) = \begin{bmatrix}
   1 &
   -t \\
   d^{-1}tq^{2-\alpha} &
   q^{-\alpha}-d^{-1}t^{2}q^{2-\alpha}
   \end{bmatrix}S_A(t),
\end{gather}
should be used instead of Equation \eqref{0 difference a}. Repeating the methodology presented in this paper with these substitutions leads to similar asymptotic estimates.

\end{enumerate}
\end{remark}
\subsection{Outline}
The paper is structured as follows. In Section \ref{derivation section} we make a series of transformations to the the $q$-RHP given in Definition \ref{qRHP def}. This motivates the form of a model RHP by taking the limit $n\rightarrow \infty$ of the RHP defined by Equation \eqref{Y_3 rhp}. Consequently in Section \ref{section 2.2}, we prove that the solution of the $q$-RHP approaches the solution to the model RHP and use this to prove Theorems \ref{main result 1} and \ref{main result 2}. In Section \ref{model RHP soln}, using $q$-difference calculus we show that there exists a unique solution to the model RHP and determine its form. In Appendix \ref{appB} we prove important properties about the solution to the model RHP. In Appendix \ref{q diff motivate} we motivate some of the arguments presented in this paper using discrete $q$-Hermite \rm{I} polynomials as an example. In Appendix \ref{Properties of hq section} we prove certain properties of $h^0(z)$ required in Section \ref{model RHP soln}.

\section{Proofs of main results}\label{proofs of main res}
In this section, we provide the proofs of Theorems \ref{main result 1} and \ref{main result 2}. To carry out the proofs, we rely on a sequence of transformations to the RHP described in Definition \ref{qRHP def}. This sequence ends with a limiting RHP, referred to as a {\it model RHP}, which is studied further in Section \ref{section 2.2} to deduce our main results. 

\subsection{Deriving the model RHP as $n\rightarrow \infty$, for even $n$}\label{derivation section}
We make a series of transformations to the RHP given by Definition \ref{qRHP def}. Recall $Y_n$ (see Equation \eqref{RHP sol}) is the solution of this RHP. Inspired by a similar approach first described by Deift \textit{et al.}, we make the series of transformations
\[ Y_n \rightarrow U_n \rightarrow V_n \rightarrow W_n, \]
which will enable us to deduce a model RHP governing $\mathcal{W}$, such that $W_n \rightarrow \mathcal{W}$ as $n\rightarrow \infty$. 

We will use the functions $f$ and $g$ in the sequence of transformations, where 
\begin{equation}\label{fz def}
    f(z) = ( z^{-2};q^2)_\infty,
\end{equation} 
and
\begin{equation}\label{g}
    g(z) = (q^2z^2,z^{-2};q^2)_\infty .
\end{equation} 
It can verified by direct calculation that $f(z)$ satisfies the $q$-difference equation:
\begin{equation}\label{fz}
    f(qz) = \left(1-\frac{1}{q^{2}z^{2}}\right)f(z),
\end{equation}
and $g(z)$ satisfies the $q$-difference equation:
\begin{equation}\label{gqdiff}
    g(qz) = -q^{-2}z^{-2}g(z).
\end{equation}

By induction, using Equation \eqref{fz}, we find for even $n$
\begin{equation}\nonumber
    (q^{n/2}z)^{n}f(q^{n/2}z) = q^{\frac{n}{2}(\frac{n}{2}-1)}f(z)\prod_{j=1}^{n/2}\left(q^{2j}z^{2}-1\right) .
\end{equation}
To be concise, let us define $g_{n}(z)$ as
\begin{equation}\label{gn} 
g_{n}(z) = f(z)\prod_{j=1}^{n/2}\left(1-q^{2j}z^{2}\right) .
\end{equation}
Thus, 
\begin{equation}\label{const}
    (q^{n/2}z)^{n}f(q^{n/2}z) = i^{n}\,q^{\frac{n}{2}(\frac{n}{2}-1)}g_n(z) .
\end{equation} 
Furthermore,
\begin{equation}\nonumber
    g(z)/g_n(z) = (q^{n+2}z^2;q^2)_{\infty},
\end{equation} 
it follows that for a fixed $z$, $g_n(z) \to g(z)$ as $n\to \infty$. 

The transformations consist of four steps.

\begin{enumerate}[label={\Roman*.},ref=\Roman*]
    \item We first define:
         \begin{gather} \nonumber
     U_n(z)=
     \begin{cases}
     Y_{n}(z)
      \begin{bmatrix}
       1 &
       0 \\
       0 &
       f(z) 
       \end{bmatrix},\ \text{for}\,z\in \text{ext}(\Gamma),\\
    Y_{n}(z), \qquad \qquad \; \; \; \text{for}\,z \in \text{int}(\Gamma).\\   
    \end{cases}
    \end{gather}
    Note that the zeros of $f(z)$ cancel with the simple poles of $h^\alpha(z)$, at $\pm q^k$ for $k \in \mathbb{N}_0$. This allows us to deform the contour, $\Gamma$, so that the poles of $h(z)$ at $\pm q^k$ for $k \in \mathbb{N}_0$ can lie in ext($\Gamma$) without affecting analyticity of the solution (see \cite[Section 2(a)]{qRHP} for a description of the holomorphicity of $Y_n$). We observe that $f(z)$ does not change the asymptotic condition; see Equation \eqref{c}.
    
    \item We now scale the contour $\Gamma$ so that the modulus of points on it are multiplied by $q^{n/2}$. (If $\Gamma$ were the unit circle, it would now be a circle with radius $q^{n/2}$.) Denote the new contour by $\Gamma_{q^{n/2}}$.
    
    \item The next transformation is 
     \begin{gather} \nonumber
     V_n(z)=
     \begin{cases}
     U_{n}(z)
      \begin{bmatrix}
       f(z)^{-1} &
       0 \\
       0 &
       1
       \end{bmatrix},\ \text{for}\,z\in \text{ext}(\Gamma_{q^{n/2}}),\\
    U_{n}(z), \qquad \qquad \qquad\ \text{for}\,z \in \text{int}(\Gamma_{q^{n/2}}).\\   
    \end{cases}
    \end{gather}
    Note we have now introduced simple poles at $\pm q^{k}$ for $k = 0,1,2,...,n/2-1$.
    \item Our final transformation is
    \begin{align}\label{wnz def} 
     W_n(z)
     = \left\{\begin{array}{lr}
     \begin{bmatrix}
       c_n^{n} &
       0 \\
       0 &
       c_n^{-n} 
       \end{bmatrix}
     V_n(z)
      \begin{bmatrix}
       (zc_n)^{-n} &
       0 \\
       0 &
       (zc_n)^n
       \end{bmatrix} &\text{for}\,z \in \text{ext}(\Gamma_{q^{n/2}}) \\
     \begin{bmatrix}
       c_n^{n} &
       0 \\
       0 &
       c_n^{-n} 
       \end{bmatrix}
      V_n(z)\qquad &\text{for}\, z \in \text{int}(\Gamma_{q^{n/2}}),
      \end{array}\right.
    \end{align}
where $c_n$ is a constant, to be defined shortly. We observe that after these transformations $W_n$ has the asymptotic condition
\begin{equation*}
    W_n(z) = I + O\left(\frac{1}{|z|}\right) .
\end{equation*}
Motivated by the form of Equation \eqref{const} we set
\[ c_n = -iq^{-\frac{1}{2}(\frac{n}{2}-1)}.\] 
\end{enumerate}

After these transformations we are left with the following transformed RHP for which the $2\times2$ complex matrix function $W_n(z)$, defined in Equation \eqref{wnz def}, is the solution:
\begin{enumerate}[label={{\rm (\roman *)}}]
\begin{subequations}\label{wn z}
\item  $W_n(z)$ is meromorphic in $\mathbb{C}\setminus \Gamma_{q^{n/2}}$ with simple poles at $z=\pm q^{k}$ for $k=0,1,2,...,n/2-1$.
\item $W_n(z)$ has continuous boundary values $W_n^-(s)$ and $W_n^+(s)$ as $z$ approaches $s\in\Gamma_{q^{n/2}}$ from $\mathcal D_{-,q^{n/2}}$ and $\mathcal D_{+,q^{n/2}}$ respectively, where
    \begin{equation} 
   W_n^+(z)
     =
      W_n^-(z)
      \begin{bmatrix}
       g_n(sq^{-n/2})^{-1} &
       q^{\alpha n/2}h^\alpha(sq^{-n/2})w(s) g_n(sq^{-n/2}) \\
       0 &
        g_n(sq^{-n/2})
       \end{bmatrix}
\end{equation}
for $\ s\in \Gamma_{q^{n/2}}$.
Note, we have used Equations \eqref{h alpha diff} and \eqref{const} to evaluate the jump condition.
\item $W_n(z)$ satisfies
    \begin{equation}
         W_n(z) = I + O\left( \frac{1}{|z|} \right), \text{ as $|z| \rightarrow \infty$} .
    \end{equation} 
\item The residue at each pole $z=\pm q^{k}$ for $k=0,1,2,...,n/2-1$, is given by
    \begin{gather} 
    \text{Res}(W_n(\pm q^{k}))
     =
      \lim_{z \rightarrow \pm q^{k}} W_n(z)
      \begin{bmatrix}
       0 &
       0 \\
       (z\mp q^k)g_{n}(zq^{-n/2})^{-2}h^\alpha(z)^{-1}w(z)^{-1} &
       0
       \end{bmatrix}.
    \end{gather}
\end{subequations} 
\end{enumerate}

We now make the change in variables $tq^{n/2} = z$. Furthermore, we scale the orthogonal weight, $|z|^\alpha w(z)$, by $q^{-\alpha n/2}$. It follows from the RHP above for $W_n(z)$ that $W_{n,scal}(t) = W_n(z) = W_n(tq^{n/2})$ solves the following RHP:
\begin{enumerate}[label={{\rm (\roman *)}}]
\begin{subequations}\label{Y_3 rhp}
\item  $W_{n,scal}(t)$ is meromorphic in $\mathbb{C}\setminus \Gamma \rightarrow \mathbb{C}^{2\times 2}$, with simple poles at $t=\pm q^{-k}$ for $k=1,2,...,n/2$.
\item $W_{n,scal}(t)$ has continuous boundary values $W_{n,scal}^-(s)$ and $W_{n,scal}^+(s)$ as $t$ approaches $s\in\Gamma$ from $\mathcal D_-$ and $\mathcal D_+$ respectively, where
    \begin{gather} 
     W_{n,scal}^+(t)
     =
      W_{n,scal}^-(t)
      \begin{bmatrix}
       g_n(s)^{-1} &
       h^{\alpha}(s)w(sq^{n/2}) g_n(s) \\
       0 &
       g_n(s)
       \end{bmatrix}, \; s\in \Gamma,
\end{gather}
\item $W_{n,scal}(t)$ satisfies
    \begin{equation}
         W_{n,scal}(t) = I + O\left( \frac{1}{|t|} \right), \text{ as $|t| \rightarrow \infty$} .
    \end{equation} 
\item The residue at each pole $t=\pm q^{-k}$ for $k=1,2,...,n/2$, is given by
    \begin{gather} \label{Y3 res}
    \text{Res}(W_{n,scal}(\pm q^{k}))
     =
      \lim_{t \rightarrow \pm q^{k}} W_{n,scal}(t)
      \begin{bmatrix}
       0 &
       0 \\
       (t\mp q^k)g_{n}(t)^{-2}h^\alpha(t)^{-1}w(tq^{n/2})^{-1} &
       0
       \end{bmatrix}.
    \end{gather}
\end{subequations} 
\end{enumerate}

As seen in the statement of Theorems \ref{main result 1} and \ref{main result 2}, we are interested in orthogonal weights which satisfy $w(zq^{n/2}) \rightarrow 1$ as $n \rightarrow \infty$. Taking the limit $n\rightarrow \infty$ of the RHP for $W_{n,scal}(t)$, motivates the following model RHP. 

\begin{definition}[Model RHP]
Assume that the contour $\Gamma$ and regions $\mathcal D_\pm$ satisfy the conditions of Definition \ref{admissable}.  
\begin{enumerate}[label={{\rm (\roman *)}}]
\begin{subequations}\label{new rhp}
\item $\mathcal{W}(t)$ is meromorphic in $\mathbb{C}\setminus \Gamma$,  with simple poles at $t=\pm q^{-k}$ for $k\in\mathbb{N}_1$.
\item $\mathcal{W}(t)$ has continuous boundary values $\mathcal{W}^-(s)$ and $\mathcal{W}^+(s)$ as $t$ approaches $s\in\Gamma$ from $\mathcal D_-$ and $\mathcal D_+$ respectively, where
    \begin{gather} \label{new jump}
      \mathcal{W}^+(t)
     =
      \mathcal{W}^-(t)
      \begin{bmatrix}
       g(s)^{-1} &
       g(s)h^\alpha(s) \\
       0 &
       g(s)
       \end{bmatrix}, \; s\in \Gamma,
    \end{gather}
    
\item $\mathcal{W}(t)$ satisfies
    \begin{equation}\label{Yinfty decay}
         \mathcal{W}(t) = I + O\left( \frac{1}{|t|} \right), \text{ as $|t| \rightarrow \infty$} .
    \end{equation} 
    However we also have that $\mathcal{W}(q^{-k})$ has poles in the LHS column for $k\in \mathbb{N}_1$. Thus, the decay condition does not hold near these poles. For example the decay condition holds for $t$ such that $|t \pm q^{-k}| > r$, for all $k\in \mathbb{N}_1$, for fixed $r>0$.
    
\item The residue at the poles $t=\pm q^{-k}$ for $k\in \mathbb{N}_1$ is given by
    \begin{gather} \label{res cond}
    \text{Res}(\mathcal{W}(\pm q^{-k}))
     =
      \lim_{t \rightarrow \pm q^{-k}} \mathcal{W}(t)
      \begin{bmatrix}
       0 &
       0 \\
       (t\mp q^{-k})g(t)^{-2}h^\alpha(t)^{-1}  &
       0
       \end{bmatrix}.
    \end{gather}
\end{subequations} 
\end{enumerate} 
\end{definition}
In Section \ref{model RHP soln} we prove that there exists a unique solution to the model RHP. We now show that $W_n(t) \rightarrow \mathcal{W}(t)$ in the limit $n\rightarrow \infty$.

\begin{remark}
In Section \ref{model RHP soln} we show $\mathcal{W}(t)$ restricted to $t\in \mathcal{D}^-$ can be analytically extended for $t\in \mathcal{D}^+$ to a $2\times2$ matrix with entire entries. Let us denote this function as $\widehat{\mathcal{W}}(t)$ {\rm (}note that $\widehat{\mathcal{W}}(t)$ = $\mathcal{W}(t)$ for $t\in \mathcal{D}^-${\rm )}. We also show that $\mathcal{W}(t)$, restricted to $t\in \mathcal{D}^+$, can be meromorphically extended for $t\in \mathcal{D}^-\setminus 0$ to a $2\times2$ matrix with simple poles for entries in the LHS column at $t=\pm q^{k}$, for $k \in \pm \mathbb{N}$. Let us denote this function as $\widetilde{\mathcal{W}}(t)$ {\rm (}note that $\widetilde{\mathcal{W}}(t)$ = $\mathcal{W}(t)$ for $t\in \mathcal{D}^+${\rm )}. For all $t\in \mathbb{C}\setminus 0$ the identity
\begin{gather}\nonumber 
     \widetilde{\mathcal{W}}(t)
     =
      \widehat{\mathcal{W}}(t)
      \begin{bmatrix}
       g(t)^{-1} &
       h^{\alpha}(t) g(t) \\
       0 &
       g(t)
       \end{bmatrix},
\end{gather}
holds. An analogous statement holds for $W_n(t)$ such that 
\begin{gather} \nonumber 
     \widetilde{W}_n(t)
     =
      \widehat{W}_n(t)
      \begin{bmatrix}
       g_n(t)^{-1} &
       h^{\alpha}(t)w(tq^{n/2}) g_n(t) \\
       0 &
       g_n(t)
       \end{bmatrix}.
\end{gather}
\end{remark}

\begin{definition}\label{hat tilde definition}
We define $\psi(t)$, $\phi(t)$, $\varphi(t)$  and $\varrho(t)$ as the $(1,1)$, $(1,2)$, $(2,1)$ and $(2,2)$ entries of $\widehat{\mathcal{W}}$ respectively.
\begin{gather} \nonumber 
     \widehat{\mathcal{W}}(t)
     =
      \begin{bmatrix}
       \psi(t) &
       \phi(t) \\
       \varphi(t) &
       \varrho(t)
       \end{bmatrix}.
\end{gather}
In Section \ref{model RHP soln} we show that these four functions are entire and can be explicitly written in terms of a power series about 0.
\end{definition}

\subsection{Proofs of Theorems \ref{main result 1} and \ref{main result 2}}\label{section 2.2}
We prove Theorem \ref{main result 1} and Theorem \ref{main result 2}. First, Theorem \ref{main result 1} is proved by showing that ${W}_n(t) \rightarrow {\mathcal{W}}(t)$ as $n \rightarrow \infty$. To do this we will construct a RHP, given by Equation \eqref{R rhp state}, that has the unique solution $R(t)$ such that $R(t) = \widetilde{W}_n(t)(\widetilde{\mathcal{W}}(t))^{-1}$ for $R(t)|_{t\in \text{ext}(\Gamma_R)}$, where $\Gamma_R$ is defined shortly. We will show that $R(t)\to I$ as $n\to \infty$, Theorem \ref{main result 1} and Theorem \ref{main result 2} then follow immediately. 

Before stating the RHP for $R(t)$ we define a number of identities.
\begin{definition}\label{gamma definition}
Define the piece-wise Jordan curve $\Gamma_R$ as $\Gamma_R = \Gamma_1 \cup \Gamma_2 \cup \Gamma_3$, where:
\begin{itemize}
    \item $\Gamma_1 = \partial B(0,q^{-m-1/2})$, where $m\in \mathbb{N}$ is a free parameter, which will later be restricted in Theorem \ref{R close to identity theorem}
    \item $\Gamma_2 = \bigcup_{k = m+1}^{n/2-1} \partial B(\pm q^{-k},r)$, where there is a large degree of freedom in choosing $r$. It is sufficient to choose $r$ such that the contours do not intersect and the orthogonality weight $w(tq^{n/2})$ is analytic in $\rm{int}(\Gamma_2)$.
    \item $\Gamma_3 = \bigcup_{k = n/2}^{\infty} \partial B(\pm q^{-k},r)$, where there is a large degree of freedom in choosing $r$. It is sufficient to choose $r$ such that the contours do not intersect and the orthogonality weight $w(tq^{n/2})$ is analytic in $\rm{int}(\Gamma_3)$.
\end{itemize}
See Figure \ref{W RHP fig} for an illustration of $\Gamma_R$. Note that $\Gamma_3$ is composed of an infinite union of circles whilst $\Gamma_2$ is composed of a finite union $(n/2-m-1)$.
\end{definition}

\begin{figure}[b]
\caption{RHP for $R$ in the $t$ plane. $\Gamma_R$ is the union of the solid black, dashed red and dotted blue circles. $\Gamma_1$ is delineated by the solid black circle, across which the jump $\widetilde{R} = \widehat{R}_1J_1$ holds. $\Gamma_2$ is delineated by the dashed red circles, across which the jump $\widetilde{R} = \widehat{R}_2J_2$ holds.  $\Gamma_3$ is delineated by the dotted blue circles, across which the jump $\widetilde{R} = \widehat{R}_3J_3$ holds.}\label{W RHP fig}
\centering
\begin{tikzpicture}
\draw (-5.5,0) -- (-2.8,0);
\draw (-2.5,0) node[left]  {\large $\wr$};
\draw[-] (-2.6,0) -- (2.6,0);
\draw (2.5,0) node[right]  {\large $\wr$};
\draw[-latex] (2.8,0) -- (5.5,0);

\draw[latex-latex] (0,-3) -- (0,3);
\draw (0,3) node[left] {\large $\Im(t)$};
\draw (6,0) node[above] {\large $\Re(t)$};

\foreach \i in {-0.0625,-0.125,-0.25,-0.5, -1, -2,-3.3,0,0.0625,0.125,0.25,0.5, 1, 2,3.3,-4.8,4.8}
\fill[black] (\i,0) circle (0.4 mm);

\draw (-2,0.5) node[above] {\small $-q^{-m-2}$};
\draw (2,0.27) node[above] {\small $q^{-m-2}$};
\draw (-3.3,0.25) node[above] {\small $-q^{-n/2}$};
\draw (3.3,0.25) node[above] {\small $q^{-n/2}$};
\draw (-1,0.25) node[above] {\small $-q^{-m-1}$};
\draw (1,0.25) node[above] {\small $\,q^{-m-1}$};
\draw (-4.8,0.25) node[above] {\small $-q^{-n/2-1}$};
\draw (4.8,0.25) node[above] {\small $\,q^{-n/2-1}$};

\draw[thin,solid] (0,0) circle (0.6cm);
\draw (0.205,0.564) -- (0.5,1) node[above right] {\small $|t| = q^{-m-1/2}$};

\draw[thin,densely dashed,red] (1,0) circle (0.3cm);
\draw[thin,densely dashed,red] (-1,0) circle (0.3cm);
\draw[thin,densely dashed,red] (2,0) circle (0.3cm);
\draw[thin,densely dashed,red] (-2,0) circle (0.3cm);
\draw[thin,densely dashed,red] (3.3,0) circle (0.3cm);
\draw[thin,densely dashed,red] (-3.3,0) circle (0.3cm);

\draw[thick,loosely dotted,blue] (4.8,0) circle (0.3cm);
\draw[thick,loosely dotted,blue] (-4.8,0) circle (0.3cm);

\draw [decorate,decoration={brace,amplitude=10pt,mirror},xshift=0pt,yshift=4pt]
(0.7,-0.5) -- (3.6,-0.5) node [black,midway,yshift=0.6cm] 
{};

\draw [decorate,decoration={brace,amplitude=10pt},xshift=0pt,yshift=4pt]
(-0.7,-0.5) -- (-3.6,-0.5) node [black,midway,yshift=0.6cm] 
{};

\draw (-2,2) node[above] {\large $\widetilde{R}$};
\draw (4.8,-0.25) node[below] {$\Gamma_3$};
\draw (-4.8,-0.25) node[below] {$\Gamma_3$};
\draw (2.1,-0.6) node[below] {$\Gamma_2$};
\draw (-2,-0.6) node[below] {$\Gamma_2$};
\draw (0.5,-0.5) node[below] {$\Gamma_1$};

\end{tikzpicture}  
\end{figure}

\begin{definition}
Define the three matrix functions:
\begin{subequations}
 \begin{align}
     J_1(t) &= \widehat{\mathcal{W}}(\widehat{W}_n)^{-1}\widetilde{W}_n(\widetilde{\mathcal{W}})^{-1}  \label{jw1} \\
     J_2(t) &= 
     \begin{bmatrix}
       1 &
       0\\
       g_n(t)^{-2}h^\alpha(t)^{-1}w(tq^{n/2})^{-1} &
       1
     \end{bmatrix}(\widetilde{\mathcal{W}})^{-1} , \label{jw2}\\
     J_3(t) &= (\widetilde{\mathcal{W}})^{-1}.\label{jw3}
\end{align}
\end{subequations}
In general these matrices have meromorphic entries with simple poles at $t = \pm q^k$, for $k\in \pm \mathbb{N}$.
\end{definition}

We now prove the following lemma.
\begin{lemma}
The unique solution to the RHP:
\begin{enumerate}[label={{\rm (\roman *)}}]
\begin{subequations}\label{R rhp state}
\item $R(t)$ is analytic in $\mathbb{C}\setminus \Gamma_R$, where $\Gamma_R$ is described above and illustrated in Figure \ref{W RHP fig},
\item $R(t)$ satisfies
    \begin{gather} 
     \lim_{t \rightarrow \Gamma_R^+} R(t)
     =
      \lim_{t \rightarrow \Gamma_R^{-}} R(t)J_R(s), \; s\in \Gamma_R,
    \end{gather}
    where $\left.J_R\right|_{\Gamma_1} = J_1|_{\Gamma_1}$, $\left.J_R\right|_{\Gamma_2} = J_2|_{\Gamma_2}$ and  $\left.J_R\right|_{\Gamma_3} = J_3|_{\Gamma_3}$,
\item 
    \begin{equation}\label{R decay}
         R(t) = I + O\left( \frac{1}{|t|} \right), \text{ as $|t| \rightarrow \infty$} ,
    \end{equation} 
\end{subequations}    
\end{enumerate} 
is given by,
\begin{equation}\label{R soln}
    R(t) = \left\{\begin{array}{lr}
       \widetilde{R} = \widetilde{W}_n(\widetilde{\mathcal{W}})^{-1} & \text{for } \left.R\right|_{\rm{ext}({\Gamma_R})},\\
        \widehat{R}_1 =  \widetilde{W}_n(\widetilde{\mathcal{W}})^{-1}(J_1)^{-1} = \widehat{W}_n(\widehat{\mathcal{W}})^{-1} & \text{for } \left.R\right|_{\rm{int}({\Gamma_1})}, \\
        \widehat{R}_2 =  \widetilde{W}_n(\widetilde{\mathcal{W}})^{-1}(J_2)^{-1} & \text{for } \left.R\right|_{\rm{int}({\Gamma_2})}, \\
        \widehat{R}_3 =  \widetilde{W}_n(\widetilde{\mathcal{W}})^{-1}(J_3)^{-1} =\widetilde{W}_n & \text{for } \left.R\right|_{\rm{int}({\Gamma_3})}.
        \end{array}\right.
\end{equation}
\end{lemma}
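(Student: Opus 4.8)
The plan is to establish the two halves of the lemma separately: first that the piecewise matrix $R(t)$ written in \eqref{R soln} genuinely solves the RHP \eqref{R rhp state} (existence), and then that any solution must coincide with it (uniqueness). Condition (ii) is largely built into the construction, since on each $\Gamma_i$ the solution is defined so that $\widetilde R=\widehat R_i J_i$; what actually needs checking there is the internal consistency of the two expressions given for $\widehat R_1$ and for $\widehat R_3$ in \eqref{R soln}, which is a one-line rearrangement using $J_1=\widehat{\mathcal W}(\widehat W_n)^{-1}\widetilde W_n(\widetilde{\mathcal W})^{-1}$ from \eqref{jw1} and $J_3=(\widetilde{\mathcal W})^{-1}$ from \eqref{jw3}. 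The real content of existence is thus the analyticity condition (i), and the substance of uniqueness is that every jump matrix has unit determinant.

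For analyticity I would treat the four regions in turn, using the factorisations $\widetilde W_n=\widehat W_n A_n$ and $\widetilde{\mathcal W}=\widehat{\mathcal W}A$ from the Remark, where $A_n=\left(\begin{smallmatrix} g_n^{-1} & h^\alpha w(tq^{n/2})g_n\\ 0 & g_n\end{smallmatrix}\right)$ and $A=\left(\begin{smallmatrix} g^{-1} & h^\alpha g\\ 0 & g\end{smallmatrix}\right)$, together with $\det A_n=\det A=1$ and the fact, established for the model RHP, that $\det\widehat{\mathcal W}\equiv1$ (so $(\widetilde{\mathcal W})^{-1}$ and $(\widehat{\mathcal W})^{-1}$ are meromorphic with the expected pole structure, poles sitting only in the second column of the inverse). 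In $\mathrm{ext}(\Gamma_R)$ every point $\pm q^{-k}$ is enclosed by one of the contours, so neither $\widetilde W_n$ nor $\widetilde{\mathcal W}$ is singular there and $\widetilde R=\widetilde W_n(\widetilde{\mathcal W})^{-1}$ is analytic. In $\mathrm{int}(\Gamma_1)$ the identity $\widehat R_1=\widehat W_n(\widehat{\mathcal W})^{-1}$ expresses $R$ through the entire continuations $\widehat W_n,\widehat{\mathcal W}$, so analyticity is immediate. In $\mathrm{int}(\Gamma_3)$ the balls surround only those $\pm q^{-k}$ lying beyond the last pole of $\widetilde W_n$ (whose poles, produced by the factor $g_n^{-1}$, occur only at $\pm q^{-k}$ with $1\le k\le n/2$), so $\widehat R_3=\widetilde W_n$ is manifestly regular there.

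The main obstacle is $\mathrm{int}(\Gamma_2)$, whose balls enclose exactly the poles $\pm q^{-k}$ of $\widetilde W_n$ that lie outside $\Gamma_1$. Here I would simplify $\widehat R_2=\widetilde W_n(\widetilde{\mathcal W})^{-1}(J_2)^{-1}$ to $\widehat R_2=\widetilde W_n\left(\begin{smallmatrix}1&0\\-\beta&1\end{smallmatrix}\right)$, with $\beta=g_n^{-2}h^\alpha(t)^{-1}w(tq^{n/2})^{-1}$ the $(2,1)$ entry appearing in \eqref{jw2}, and show the triangular factor removes the poles of $\widetilde W_n$. Writing $\widetilde W_n=\left(\begin{smallmatrix}a&b\\c&d\end{smallmatrix}\right)$ with $a,c$ carrying the simple poles and $b,d$ analytic (the second column of $A_n$ being regular because the zeros of $g_n$ cancel the poles of $h^\alpha$), the residue condition \eqref{Y3 res} states precisely that $\mathrm{Res}_{\,q^{-k}}a=b(q^{-k})\,\mathrm{Res}_{\,q^{-k}}\beta$ and $\mathrm{Res}_{\,q^{-k}}c=d(q^{-k})\,\mathrm{Res}_{\,q^{-k}}\beta$. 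Hence the singular parts of the entries $a-b\beta$ and $c-d\beta$ of $\widehat R_2$ cancel and $\widehat R_2$ extends analytically across each such ball. This residue bookkeeping, matching the poles of $\widetilde W_n$ against those of $\beta$, is where the design of the transformations in Section~\ref{derivation section} is genuinely used, and is the step I expect to require the most care.

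Finally, the decay \eqref{R decay} follows because $\widetilde W_n,\widetilde{\mathcal W}\to I$ as $|t|\to\infty$ (both agree with $W_n,\mathcal W$ on $\mathcal D_+$), whence $\widetilde R=\widetilde W_n(\widetilde{\mathcal W})^{-1}\to I$. For uniqueness I would record $\det J_1=\det(A_nA^{-1})=1$ and $\det J_2=\det J_3=(\det\widetilde{\mathcal W})^{-1}=(\det\widehat{\mathcal W})^{-1}=1$; then for any two solutions $R,R'$ the determinant $\det R'$ is analytic off $\Gamma_R$, continuous across it, hence entire with $\det R'\to1$, so $\det R'\equiv1$ by Liouville and $R'$ is invertible everywhere. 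The ratio $R(R')^{-1}$ then has no jump across $\Gamma_R$ (the $J_R$ factors cancel) and no poles (both factors are analytic off $\Gamma_R$), so it is entire and equal to $I+O(1/|t|)$ at infinity; Liouville's theorem forces $R(R')^{-1}\equiv I$, giving uniqueness.
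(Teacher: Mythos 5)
Your proof is correct and follows essentially the same route as the paper's: the same region-by-region analyticity check (your explicit residue bookkeeping inside $\Gamma_2$ is precisely the step the paper dispatches by citing the residue condition \eqref{Y3 res}), and the same uniqueness argument via unit-determinant jump matrices and Liouville's theorem, which the paper defers to Section \ref{uniqueness section}. One cosmetic slip: since $\det\widetilde{\mathcal{W}}=1$ the inverse is the adjugate, so the poles inherited from the first column of $\widetilde{\mathcal{W}}$ appear in the second \emph{row} of $(\widetilde{\mathcal{W}})^{-1}$, not its second column; this plays no role in your argument.
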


\begin{proof}
\textbf{Existence.} By definition $\widetilde{R}(t)$ is analytic in ext($\Gamma_R$) (as $\widetilde{W}_n$ and $\widetilde{\mathcal{W}}$ are analytic in ext($\Gamma_R$)). Thus, $R(t)$, as defined in Equation \eqref{R soln}, is analytic in ext($\Gamma_R$). We are left to show that $R(t)$ (given by Equation \eqref{R soln}) is analytic in int($\Gamma_R$).\\ 

First we look at the region $|t| < q^{-m-1/2}$. The matrix $J_1$ is defined in Equation \eqref{jw1} as
\begin{gather}\nonumber
J_1 = \widehat{\mathcal{W}}(\widehat{W}_n)^{-1}\widetilde{W}_n(\widetilde{\mathcal{W}})^{-1} = \widehat{\mathcal{W}}\begin{bmatrix}
   1/g_n &
   g_n h^\alpha w(tq^{n/2}) \\
   0 &
   g_n
   \end{bmatrix}
   \begin{bmatrix}
   g &
   -g h^\alpha \\
   0 &
   1/g
   \end{bmatrix}(\widehat{\mathcal{W}})^{-1}.
\end{gather}
By definition, $\widehat{R}_1 = \widetilde{W}_n(\widetilde{\mathcal{W}})^{-1}(J_1)^{-1} = \widehat{W}_n(\widehat{\mathcal{W}})^{-1}$ which is analytic in int($\Gamma_R$). 

Next we look at $q^{-m-1/2} < |t| < q^{-n/2}$. By definition
\begin{gather*} 
\begin{aligned}
\widehat{R}_2 &= \widetilde{W}_n(\widetilde{\mathcal{W}})^{-1}(J_2)^{-1},\\
&= \widetilde{W}_n(\widetilde{\mathcal{W}})^{-1}(\widetilde{\mathcal{W}}) \begin{bmatrix}
   1 &
   0\\
   -g_n(t)^{-2}h^\alpha(t)^{-1}w(tq^{n/2})^{-1} &
   1
   \end{bmatrix},\\
    &= \widetilde{W}_n\begin{bmatrix}
   1 &
   0\\
   -g_n(t)^{-2}h^\alpha(t)^{-1}w(tq^{n/2})^{-1} &
   1
   \end{bmatrix}.
   \end{aligned}
\end{gather*}
From the residue condition for $W_n$, given by Equation \eqref{Y3 res}, we conclude that $\widehat{R}_2$ is analytic in int($\Gamma_2$). \\ 

Finally we consider $|t|>q^{-n/2}$. By definition
\begin{eqnarray*}
    \widehat{R}_3 &=& \widetilde{W}_n(\widetilde{\mathcal{W}})^{-1}(J_3)^{-1},\\
     &=& \widetilde{W}_n(\widetilde{\mathcal{W}})^{-1}\widetilde{\mathcal{W}},\\
     &=& \widetilde{W}_n.
\end{eqnarray*}
From the residue condition for $W_n$, given by Equation \eqref{Y3 res}, we know that $\widetilde{W}_n(t)$ is analytic for $|t| > q^{-n/2}$ and thus $\widehat{R}_3$ is analytic in int($\Gamma_3$). \\

\textbf{Uniqueness.} We note that $\text{det}(W_n) = \text{det}(\mathcal{W}) = 1$. It follows that $\text{det}(J_R)=1$, and applying the same arguments as in Section \ref{uniqueness section} we conclude that if a solution exists to the RHP given by Equation \eqref{R rhp state}, then it is unique.

\end{proof}

We now prove that under certain conditions the solution, $R(t)$, to the RHP given by Equation \eqref{R rhp state} approaches the identity. We first prove a lemma about the jump matrix $J_R$. 

\begin{lemma}\label{JR goes to the identity}
    There exists an $M$ such that for any fixed integer $m>M$, the jump conditions $J_i$ defined in Equations \eqref{jw1}, \eqref{jw2} and \eqref{jw3} satisfy:
\begin{eqnarray}
    \Vert J_1(t) - I\Vert_{\Gamma_1} &=& C(m)q^n,\\
    \Vert J_2(t) - I\Vert_{\Gamma_2} &<& 1/2,\\
    \Vert J_3(t) - I\Vert_{\Gamma_3} &=& O(q^{n/2}),
\end{eqnarray}
for large $n>N_m$. {\rm (}Where $\Vert J_1(t) - I\Vert_{\Gamma_1}$ is the infinity norm of the matrix $J_1(t) - I$ restricted to the curve $\Gamma_1$, and $C(m)$ is a function of $m$, independent of $n,t$.{\rm )}
\end{lemma}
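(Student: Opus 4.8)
The plan is to treat the three components $\Gamma_1,\Gamma_2,\Gamma_3$ of $\Gamma_R$ separately, in each case substituting the two connection identities $\widetilde W_n=\widehat W_n M_n$ and $\widetilde{\mathcal W}=\widehat{\mathcal W}M$ (where $M_n,M$ are the upper-triangular factors recorded in the Remark just before Definition \ref{hat tilde definition}) into \eqref{jw1}--\eqref{jw3}. This reduces each $J_i-I$ to an explicit product of the $q$-special functions $f,g,g_n,h^\alpha$, the weight $w$, and the model solution $\widehat{\mathcal W},\widetilde{\mathcal W}$, which can then be estimated on the relevant circles.

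On $\Gamma_1$ the identities collapse the $\widehat W_n$ factors in \eqref{jw1} and yield the conjugated form
\[
J_1=\widehat{\mathcal W}\begin{bmatrix} g/g_n & h^\alpha\!\left(\tfrac{g_n}{g}\,w(tq^{n/2})-\tfrac{g}{g_n}\right)\\ 0 & g_n/g \end{bmatrix}\widehat{\mathcal W}^{-1}.
\]
First I would estimate the middle matrix. By the identity $g/g_n=(q^{n+2}t^2;q^2)_\infty$ established above, its diagonal entries equal $1+O(q^{\,n+1-2m})$ on the circle $|t|=q^{-m-1/2}$. For the off-diagonal entry the decisive input is the quantitative admissibility hypothesis: since $|1-w(\pm q^{n/2})|<cq^n$ holds at both signs, the Taylor coefficients of the analytic function $w$ at the origin satisfy $w(0)=1$ and $w'(0)=0$, so $w(s)-1=O(s^2)$ and hence $w(tq^{n/2})-1=O(q^{\,n-2m-1})$ on $\Gamma_1$; with the diagonal estimate this makes the off-diagonal entry $O(q^{\,n-2m-1})$. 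As $\widehat{\mathcal W}$ is entire with $\det\equiv1$ (hence bounded with bounded inverse on the compact curve $\Gamma_1$), and $h^\alpha$ has no pole on $|t|=q^{-m-1/2}$ (this radius lies strictly between the pole radii $q^{-m}$ and $q^{-m-1}$), conjugation only multiplies by $m$-dependent constants. Absorbing $q^{1-2m}$ into $C(m)$ gives $\Vert J_1-I\Vert_{\Gamma_1}=C(m)q^n$; this is exactly the step where the strength of Definition \ref{admissable} is needed, relaxing it to $w(0)=1$ yielding only $O(q^{n/2})$ here.

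On $\Gamma_3$ we have $J_3=(\widetilde{\mathcal W})^{-1}$ on circles of fixed radius $r$ about $\pm q^{-k}$ with $k\ge n/2$. On $\mathcal D_+$ one has $\widetilde{\mathcal W}=\mathcal W$, so away from the poles the decay condition \eqref{Yinfty decay} gives $\widetilde{\mathcal W}-I=O(1/|t|)=O(q^{k})$, which is $O(q^{n/2})$ since $|t|\gtrsim q^{-n/2}$. It remains to check the pole contributions do not spoil this: from \eqref{res cond} the residue of $\mathcal W$ at $\pm q^{-k}$ is $c_k$ times the analytic right-hand column of $\widehat{\mathcal W}$ there, with $c_k=\lim_{t\to\pm q^{-k}}(t\mp q^{-k})g(t)^{-2}h^\alpha(t)^{-1}$. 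As $g$ has a simple zero (via \eqref{g}, \eqref{gqdiff}) and $h^\alpha$ a simple pole of residue $q^{-k(1+\alpha)}$ at $\pm q^{-k}$, the constant $c_k$ is an explicit, rapidly decaying power of $q$ in $k$; with the growth bounds for $\psi,\varphi$ from Section \ref{model RHP soln} and Appendix \ref{appB} the residues are $O(q^{n/2})$ (in fact far smaller) for $k\ge n/2$, uniformly over the infinitely many circles. Hence $\Vert J_3-I\Vert_{\Gamma_3}=O(q^{n/2})$.

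The main obstacle is $\Gamma_2$. Writing $\widetilde{\mathcal W}^{-1}=I+E$ and $L=g_n^{-2}(h^\alpha)^{-1}w(tq^{n/2})^{-1}$, \eqref{jw2} gives $J_2-I=\left[\begin{smallmatrix}0&0\\ L&0\end{smallmatrix}\right]+\left[\begin{smallmatrix}1&0\\ L&1\end{smallmatrix}\right]E$. The delicate point is that $L$ has a simple pole at $\pm q^{-k}$ (the double pole of $g_n^{-2}$ from the simple zero of the finite product in \eqref{gn} against the simple zero of $(h^\alpha)^{-1}$), while $E$ carries the simple pole of $(\widetilde{\mathcal W})^{-1}$ from the $g^{-2}$-residue in \eqref{res cond}; these nearly cancel, the mismatch being governed by $g_n/g-1=(q^{n+2}t^2;q^2)_\infty-1$, which is small on the relevant circles because there $k\le n/2-1$. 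Both this mismatch and the regular deviation $\mathcal W-I=O(1/|t|)$ shrink as the circles move outward, so the plan is to fix $M$ large enough that for every $m>M$ and every $k>M$ the resulting explicit combination of $q$-products and values of $\psi,\varphi$ stays within $1/2$ of $I$; since $\Gamma_2$ then lies entirely at radius $\ge q^{-M}$, the crude bound $\Vert J_2-I\Vert_{\Gamma_2}<1/2$ follows. This uniform control of the near-cancelling poles on $\Gamma_2$ (and the analogous residue bound on $\Gamma_3$) is the hardest part and rests on the explicit growth estimates for the model solution in Section \ref{model RHP soln} and Appendix \ref{appB}; by comparison the $\Gamma_1$ bound is routine once the second-order vanishing of $w-1$ at $0$ has been extracted.
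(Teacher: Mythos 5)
Your proposal is correct and follows essentially the same route as the paper's proof: the same conjugated upper-triangular reduction of $J_1$ with $p=g/g_n$ and the admissibility bound forcing the off-diagonal term to be $O(q^n)$ on $|t|=q^{-m-1/2}$, the same use of the decay condition \eqref{Yinfty decay} to get $\Vert\widetilde{\mathcal{W}}-I\Vert = O(q^m)$ on $\Gamma_2$ and $O(q^{n/2})$ on $\Gamma_3$, and the same choice of a large threshold $M$. The only differences are elaborations rather than a different argument: you make explicit the step $w(0)=1$, $w'(0)=0$, hence $w(s)-1=O(s^2)$ (which the paper leaves implicit but which is indeed the precise point where the full strength of Definition \ref{admissable} enters, consistent with the paper's remark that relaxing it degrades the error), while your near-cancellation analysis of the poles of $L$ and $E$ on $\Gamma_2$ is unnecessary for the stated bound, since the estimate is only needed on the circles themselves, at fixed distance $r$ from the poles, where each pole contribution is individually negligible because the residues decay super-exponentially (cf.\ Equation \eqref{ag}); the paper instead just bounds $g_n^{-2}(h^\alpha)^{-1}w^{-1}$ directly on $\Gamma_2$.
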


\begin{proof}
First we consider $J_3$. By the asymptotic condition, Equation \eqref{Yinfty decay}, we know that 
\begin{equation}\label{step 1 eq}
    \Vert \widetilde{\mathcal{W}}(t)-I\Vert_{\partial B(\pm q^{-k},r)} < C_1q^{k},
\end{equation}
where the radius $r$ can be ignored as we will be considering the case $r\ll q^{-k}$. Applying Equation \eqref{step 1 eq} and Definition \ref{gamma definition} we find 
\[\Vert \widetilde{\mathcal{W}}(t)-I\Vert_{\Gamma_3} < C_1q^{n/2}. \]
By definition $J_3(t) = \widetilde{\mathcal{W}}(t)$, hence we conclude
\[\Vert J_3(t)-I\Vert_{\Gamma_3} < C_1q^{n/2}. \]

Next we study $J_2$. Applying Equation \eqref{step 1 eq} we find for $\Gamma_2 = \bigcup_{k = m+1}^{n/2-1} \partial B(\pm q^{-k},r)$
\begin{equation*}
\Vert \widetilde{\mathcal{W}}(t)-I\Vert_{\Gamma_2} < C_1q^{m}.
\end{equation*}
Furthermore, we observe that the matrix 
\begin{gather*} 
\Psi(t) = \begin{bmatrix}
   1 &
   0\\
   g_n(t)^{-2}h^\alpha(t)^{-1}w(tq^{n/2})^{-1} &
   1
   \end{bmatrix} - I,
\end{gather*}
vanishes much faster than $\Vert \Psi(t)-I\Vert_{\partial B(\pm q^{-k},r)} < C/|t|$. To see this, recall from Equation \eqref{gn} that 
\begin{equation*}
g_{n}(t) = \left( \prod_{j=1}^{n/2}1-q^{2j}t^{2}\right)\prod_{j=0}^{\infty}\left(1-\frac{q^{2j}}{t^{2}}\right) .
\end{equation*}
Thus, $g_n(t)^{-2}$ is vanishingly small for large $|t|$. (To see this, expand out the first few terms of the product $\prod_{j=1}^{n/2}1-q^{2j}t^{2}$). Hence we conclude that
\[ \Vert J_2(t)-I\Vert_{\Gamma_2} < C_2 q^{m}. \]
It follows there exists an $M$ such that for $m > M$.
\[ \Vert J_2(t)-I\Vert_{\Gamma_2} < 1/2. \] 

Next we study $J_1$. Define the function 
\begin{equation} 
p(t) = g(t)/g_n(t) = \prod_{j=n/2+1}^{\infty}\left(1-q^{2j}t^{2}\right),
\end{equation}
where $g_n$ and $g$ are given in Equations \eqref{gn} and \eqref{g} respectively. For large $n$, we can take a Taylor series expansion of ${\rm log}(p(t))$ to find
\begin{eqnarray}
   {\rm log}(p(t)) &=& \sum_{j=1}^\infty {\rm log}(1-q^{n+2j}t^{2}),\nonumber\\
   &<& 2 \sum_{j=1}^\infty -q^{n+2j}t^{2}, \nonumber \\
   &=& q^{n} \frac{-2q^2t^2}{1-q^2}. \label{p(t) On}
\end{eqnarray}
Furthermore, define
\[ H(t) = h(t)\left( p(t)-p(t)^{-1}w(tq^{n/2}) \right) .\]
Then, expanding Equation \eqref{jw1} we find 
\begin{eqnarray*}
J_1(1,1) &=& \psi\varrho p - \phi\varphi /p - \psi\varphi H ,\\
J_1(1,2) &=& \phi\psi(1/p - p) + \psi^{2}H ,\\
J_1(2,1) &=& \varrho\varphi(p-1/p) - \varphi^2H ,\\
J_1(2,2) &=& \psi\varrho/ p - \phi\varphi p + \psi\varphi H ,
\end{eqnarray*}
where $\psi, \phi, \varphi, \varrho$ are defined in Definition \ref{hat tilde definition}. We note that $\psi\varrho-\varphi\phi = \text{det}(\widehat{\mathcal{W}}) = 1$. Applying Equation \eqref{p(t) On} it is clear that for a fixed $|t|=q^{-m-1/2}$, there exists an $N_m$ such that for $n>N_m$
\[\Vert J_1(t)-I\Vert_{\Gamma_1} < C_3(m)q^{n}, \]
where $C_3(m)$ is a function of $m$. Note that we choose $|t|=q^{-m-1/2}$ because there exists poles of the jump function, $J_1$, at $t = \pm q^{-m}$ for integer values of $m$. 
\end{proof}

\begin{remark}\label{remark 2.11}
Lemma \ref{JR goes to the identity} holds for $t$ lying on $\Gamma_R$. However, from Equations \eqref{jw1}, \eqref{jw2} and \eqref{jw3}, the matrix functions $J_i(t):\mathbb{C}\rightarrow \mathbb{C}^{2\times2}$ are well defined for all $t\in \mathbb{C}$. In general, the matrices $J_i(t)$ do not approach the identity everywhere, but, they will have simple poles of order $O(q^n)$ at $\pm q^k$, for $k\in \pm \mathbb{N}$. Furthermore, 
\[ \sum_{k=-\infty}^{\infty} |{\rm Res}(J_i(q^k))| = O(q^n). \]
This follows by direct computation, observing that $g(t)^{-1}$ and $g_n(t)^{-1}$ are vanishingly small for large $t$, and applying Equation \eqref{bar a} which demonstrates that $\psi(q^{-k})$ and $\varphi(q^{-k})$ also become vanishingly small for large positive integer values of $k$.
\end{remark}

Having proved Lemma \ref{JR goes to the identity} we are now in a position to show that the solution, $R(t)$, to the RHP given by Equation \eqref{R rhp state} approaches the identity.
\begin{lemma}\label{R close to identity theorem}
    For a given integer $m>M$, where $M$ satisfies Lemma \ref{JR goes to the identity}, the solution, $R(t)$, to the RHP defined in Equation \eqref{R rhp state} is bounded for large $n$. Furthermore, $\widehat{R}_1(t) = 1 + O(q^n)$ and $\widetilde{R}(t) = 1 + O(q^n)$.
\end{lemma}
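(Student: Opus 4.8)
The plan is to treat \eqref{R rhp state} as a small-norm (Beals--Coifman) Riemann--Hilbert problem: reformulate it as a singular integral equation on $\Gamma_R$ and solve that by a Neumann series whose convergence is supplied by Lemma \ref{JR goes to the identity}. Writing $\mathcal{C}_-$ for the boundary value from $\mathcal D_-$ of the Cauchy operator on $\Gamma_R$ and setting $\mathcal{C}_{J_R}f=\mathcal{C}_-\bigl(f(J_R-I)\bigr)$, the unique solution of \eqref{R rhp state} has the representation
\[
R(z)=I+\frac{1}{2\pi i}\int_{\Gamma_R}\frac{\mu(s)\bigl(J_R(s)-I\bigr)}{s-z}\,ds ,
\]
where the density $\mu$ solves $(I-\mathcal{C}_{J_R})\mu=I$. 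The whole argument then splits into (a) showing $I-\mathcal{C}_{J_R}$ has a bounded inverse uniformly in $n$, which yields the boundedness of $R$, and (b) extracting the sharp $O(q^n)$ size of $R-I$ on $\mathrm{int}(\Gamma_1)$ and on $\mathrm{ext}(\Gamma_R)$.

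For step (a) I would estimate $\Vert\mathcal{C}_{J_R}\Vert_{L^2(\Gamma_R)}\le\Vert\mathcal{C}_-\Vert\,\Vert J_R-I\Vert_{L^\infty(\Gamma_R)}$. By Lemma \ref{JR goes to the identity} the sup-norm of $J_R-I$ is $O(q^n)$ on $\Gamma_1$, $O(q^{n/2})$ on $\Gamma_3$, and bounded by a fixed constant $<1/2$ on $\Gamma_2$ once $m>M$, so $\Vert J_R-I\Vert_{L^\infty(\Gamma_R)}$ can be made arbitrarily small by taking $m$ and $n$ large. The one genuinely nontrivial input is that $\Vert\mathcal{C}_-\Vert$ stays bounded uniformly in $n$ even though $\Gamma_R$ is an $n$-dependent and (on $\Gamma_3$) infinite union of circles; this uses that the circles are geometrically separated, their centres $\pm q^{-k}$ spreading apart like $q^{-k}$, together with the geometric decay $\Vert\widetilde{\mathcal{W}}-I\Vert_{\partial B(\pm q^{-k},r)}<C_1q^{k}$ from \eqref{step 1 eq}, which makes $\Vert J_R-I\Vert_{L^2(\Gamma_3)}=O(q^{n/2})$ finite on the infinite contour. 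A convergent Neumann series then produces $\mu$ with $\Vert\mu-I\Vert_{L^2}\lesssim\Vert J_R-I\Vert_{L^2}$, giving the first assertion that $R$ is bounded for large $n$.

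For step (b), fix $z\in\mathrm{int}(\Gamma_1)$ (for $\widehat{R}_1$) or $z\in\mathrm{ext}(\Gamma_R)$ bounded away from the poles (for $\widetilde{R}$); in both cases $\mathrm{dist}(z,\Gamma_2\cup\Gamma_3)$ is bounded below by a constant depending only on $m$. I would split the representation integral over $\Gamma_1,\Gamma_2,\Gamma_3$. The $\Gamma_1$ piece is directly $O(q^n)$, since $\Gamma_1$ has fixed length, $J_1-I=O(q^n)$, and $\mu$ is bounded. The decisive observation for $\Gamma_2\cup\Gamma_3$ is that the density $\mu$ there equals the interior boundary value, i.e. $\mu=\widehat{R}_2$ (respectively $\widetilde{W}_n$), which by the existence part of the preceding lemma is \emph{analytic} inside each circle; hence the integrand $\mu(s)\bigl(J_R(s)-I\bigr)/(s-z)$ is meromorphic in each $B(\pm q^{-k},r)$ with a single simple pole at $\pm q^{-k}$, and its analytic (regular) part — which carries the non-small $O(q^k)$ size of the jump — integrates to zero against $1/(s-z)$ for $z$ outside the circle. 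Thus each circle contributes only $\widehat{R}_2(\pm q^{-k})\,\mathrm{Res}\bigl(J_R(\pm q^{-k})\bigr)/(\pm q^{-k}-z)$, and summing over the circles and invoking $\sum_k\lvert\mathrm{Res}(J_R(\pm q^{-k}))\rvert=O(q^n)$ from Remark \ref{remark 2.11} makes the entire $\Gamma_2\cup\Gamma_3$ contribution $O(q^n)$. Together with the $\Gamma_1$ piece this gives $\widehat{R}_1=I+O(q^n)$ and $\widetilde{R}=I+O(q^n)$.

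The main obstacle is exactly the conceptual point hidden in step (b): on $\Gamma_2$ the jump stays of fixed size $O(q^m)$ as $n\to\infty$, so the sharp $O(q^n)$ rate cannot come from the smallness of $J_R-I$ — a bare $L^2$ estimate of the cross term $(\mu-I)(J_R-I)$ only yields $O(q^{2m})$ — but must be produced by the cancellation encoded in the residue structure of Remark \ref{remark 2.11}. Making this rigorous hinges on justifying the residue reduction, namely that $\mu$ inherits the interior analyticity of $\widehat{R}_2$ and that the meromorphic continuations of the jumps have only the single expected simple pole inside each circle, so that the large regular part drops out and only the $O(q^n)$ residues survive. A secondary technical point, already flagged above, is the uniform-in-$n$ boundedness of the Cauchy operator on the infinite contour $\Gamma_3$, and one must be careful with the boundary-value bookkeeping (which side is $\pm$) when identifying $\mu$ with the interior value on the circles.
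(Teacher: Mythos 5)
Your proposal is correct in substance, and its decisive step (b) --- the residue localization on $\Gamma_2\cup\Gamma_3$ --- is exactly the mechanism of the paper's proof. The paper writes $\widetilde{R}(t)=I+\frac{1}{2\pi i}\oint_{\Gamma_R}R(s)\Delta(s)(t-s)^{-1}ds$ with $\Delta=J_R-I$ and $R(s)$ the interior boundary value, collapses each circle integral to the residue at its centre (possible precisely because the interior value is analytic there and the jump continues meromorphically with a single simple pole), and invokes Remark \ref{remark 2.11} to get $\sum_k\lvert\mathrm{Res}(\Delta(\pm q^k))\rvert=O(q^n)$; this matches your argument, including the key observation that the $O(q^m)$ bulk of $J_2-I$ must cancel rather than be estimated. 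Where you genuinely differ is step (a), the boundedness of the density: you invoke Beals--Coifman $L^2$ small-norm theory, which needs the $L^2$-boundedness of the Cauchy projection $\mathcal{C}_-$ on the infinite union of circles --- an input you flag but do not prove. The paper avoids operator norms altogether: it sets $L=\sup_{t\in\mathrm{int}(\Gamma_R)}\lvert R(t)\rvert$, notes by the maximum principle that $L$ is attained on $\Gamma_R$, expresses the boundary value there through the jump as (exterior representation)$\times(I+\Delta)^{-1}$ using the \emph{pointwise} Neumann series (valid since $\Vert\Delta\Vert<1/2$ pointwise by Lemma \ref{JR goes to the identity}), and obtains the self-consistent inequality $L<2+2L\cdot O(q^n)$, whence $L<2$ for large $n$. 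The paper's route buys independence from any Cauchy-operator estimate; your route, once $\Vert\mathcal{C}_-\Vert<\infty$ is supplied, is more systematic and would generalize, and in fact your worry about uniformity in $n$ is moot: $\Gamma_2\cup\Gamma_3=\bigcup_{k>m}\partial B(\pm q^{-k},r)$ is the same curve for every $n$ (only the labelling of which jump applies moves with $n$), so a single boundedness statement for a fixed AD-regular contour suffices. One quantitative caveat in your step (a): for the fixed $m>M$ the jump on $\Gamma_2$ is only $O(q^m)$ and does \emph{not} shrink as $n\to\infty$, so convergence of your operator Neumann series requires $\Vert\mathcal{C}_-\Vert\,C_2q^m<1$, i.e.\ your threshold $M$ may need to be enlarged depending on $\Vert\mathcal{C}_-\Vert$, whereas the paper's pointwise argument needs only the $1/2$ bound of Lemma \ref{JR goes to the identity}.
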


\begin{proof}
Define 
\begin{equation*}
    \Delta = J_{R}-I.
\end{equation*}
It immediately follows that for $s$ in $\Gamma_R$
\begin{equation*}
    \lim_{t\to\Gamma^+}R(t) = \lim_{t\to\Gamma^-}R(t)(I+\Delta(s)).
\end{equation*}
By the asymptotic condition, Equation \eqref{R decay}, we conclude that
\begin{eqnarray}
    \widetilde{R}(t) &=& I +\frac{1}{2\pi i}\oint_{\Gamma_R}\frac{R(s)\Delta(s)}{t-s}ds, \label{R outside epression} \\
    &=& I + \sum_{k=-\infty}^{\infty}\frac{\text{Res}(R(\pm q^{k})\Delta(\pm q^k))}{t\pm q^k}, \nonumber\\
     &=& I + \sum_{k=-m}^{\infty}\frac{\text{Res}(\widehat{R}_1(\pm q^{k})J_1(\pm q^k))}{t\pm q^k} \nonumber\\
     &&\; + \sum_{k=-n/2}^{-m-1}\frac{\text{Res}(\widehat{R}_2(\pm q^{k})J_2(\pm q^k))}{t\pm q^k}+ \sum_{k=-\infty}^{-n/2-1}\frac{\text{Res}(\widehat{R}_3(\pm q^{k})J_3(\pm q^k))}{t\pm q^k}. \nonumber
\end{eqnarray}
Let $L$ be defined as $L = \sup_{t\in\text{int}(\Gamma_R)}(|R(t)|)$. As $R(t)$ is analytic in $\text{int}(\Gamma_R)$ it follows $|R(t)|$ achieves its maximum on the boundary (i.e. on $\Gamma_R$). Therefore,
\begin{equation*}
    L = \left| \left(I + \sum_{k=-\infty}^{\infty}\frac{\text{Res}(R(\pm q^{k})\Delta(\pm q^k))}{s\pm q^k}\right)\left(I+\Delta(s)\right)^{-1}\right|,
\end{equation*}
for some $s$ on $\Gamma_R$. Furthermore, $L > |R(\pm q^k)|$ for $k \in \pm \mathbb{N}$, as the points $t=\pm q^k$ lie in $\text{int}(\Gamma_R)$ for $k \in \pm \mathbb{N}$. As $\Vert \Delta\Vert_\infty<1/2$, we can also determine $\left(I+\Delta(s)\right)^{-1}$ using the Neumann series 
\begin{equation*}
    \left(I+\Delta(s)\right)^{-1} = \sum_{j=0}^{\infty}(-\Delta(s))^{j}.
\end{equation*}
Thus we find that,
\begin{eqnarray}
    L &<& \left(I + L \sum_{k=-\infty}^{\infty}\left|\frac{\text{Res}(\Delta(\pm q^k))}{s\pm q^k}\right| \right)\sum_{j=0}^{\infty}\Vert \Delta(s)\Vert^{j},\nonumber\\
    &<& 2\left(I + L \sum_{k=-\infty}^{\infty}\left|\frac{\text{Res}(\Delta(\pm q^k))}{s\pm q^k}\right| \right). \label{M ineq}
\end{eqnarray}
It follows from Remark \ref{remark 2.11} the sum on the RHS of Equation \eqref{M ineq} converges and
\[
\sum_{k=-\infty}^{\infty}\left|\frac{\text{Res}(\Delta(\pm q^k))}{s\pm q^k}\right| = O(q^n).\]
Thus, Equation \eqref{M ineq} gives:
\begin{equation}\label{the end}
    L < 2 + 2 L \times O(q^n).
\end{equation}
It follows there exists an $N$ such that for $n>N$, $L < 2$. Hence, we have just determined an upper bound for $|R(t)|$ inside int($\Gamma_R$). Observing that $\text{Res}(\Delta(\pm q^k)) = O(q^n)$, Equation \eqref{R outside epression} implies that for any fixed radius $r>0$
\begin{equation}\label{tildeR Oq}
    \widetilde{R}(t) = I + O(q^n)/r \; \text{for }|t \pm q^{\pm k}| > r. 
\end{equation}
By definition for $s$ lying on $\Gamma_1$,
\begin{equation}\label{R1 Oq}
    \widehat{R}_1(s) = \widetilde{R}(s)(J_1)^{-1}.
\end{equation} 
Recall that $|\widehat{R}_1(t)|$ achieves its maximum on $\Gamma_1$. Applying Equations \eqref{tildeR Oq} and \eqref{R1 Oq} and Lemma \ref{JR goes to the identity}, where we showed $\Vert J_1 - I\Vert  = C(m)O(q^n)$, we conclude $\widehat{R}_1(t) = I+ C(m)O(q^n)$.  
\end{proof}

Having proved Lemma \ref{R close to identity theorem} we are now in a position to prove Theorem \ref{main result 1}.

\begin{proof}[Proof of Theorem \ref{main result 1}]
For $|t| \leq q^{-m-1/2}$ the result follows immediately from Lemma \ref{R close to identity theorem}. Lemma \ref{R close to identity theorem} implies that
\[ \widehat{W}_n(z) \widehat{\mathcal{W}}(t)^{-1} = I + C(m)O(q^n), \]
for large $n$. Theorem \ref{main result 1} follows immediately after reversing the transformations $Y_n \rightarrow W_n$.\\

For $|t| > q^{-m-1/2}$, we observe that Equation \eqref{tildeR Oq} implies that 
\[ \widetilde{R}(t) - I = O(q^n), \]
for $|t \pm q^{\pm k}| > r$ (for some fixed $r$). We also observe that $\widetilde{\mathcal{W}}- I$ is bounded for $|t| > q^{-m-1/2}$ and $|t \pm q^{\pm k}| > r$, and goes to zero as $|t|\to\infty$. Furthermore, Equation \eqref{ag} implies that the poles of $\mathcal{W}$ and $R(t)$ vanish for large $|t|$, much faster than the function $f(t)$ defined in Equation \eqref{fz def} grows. As, 
\[ \widetilde{W_n}(z)\widetilde{\mathcal{W}}(t)^{-1} = R(t) ,\]
this allows one to more accurately describe the behaviour of $P_n(q^k)$, $k\in\mathbb{N}$, as $n\to\infty$.
\end{proof}

We now prove Theorem \ref{main result 2}.

\begin{proof}[Proof of Theorem \ref{main result 2}].
Theorem \ref{main result 2} follows from Lemma \ref{R close to identity theorem}. We note that in transforming from Equation \eqref{wn z} to Equation \eqref{Y_3 rhp} the weight function was scaled by $q^{-n\alpha/2}$. Let $\zeta_n = q^{-n\alpha/2}\gamma_n$. Substituting in
\begin{gather}\nonumber
Y_n(z) 
=
\begin{bmatrix}
   P_{n}(z) &
   \oint_{\Gamma}\frac{P_{n}(s)w(s)h^\alpha(s)}{2\pi i (z-s)}ds \\
   \gamma_{n-1}^{-1} P_{n-1}(z) &
   \oint_{\Gamma}\frac{ P_{n-1}(s)w(s)h^\alpha(s)}{\gamma_{n-1}2\pi i (z-s)}ds
   \end{bmatrix},
\end{gather}
we can evaluate the expression 
\begin{gather}\nonumber
\begin{bmatrix}
   c_n^n &
   0 \\
   0 &
   c_n^{-n}
   \end{bmatrix}W_n(t) \begin{bmatrix}
   (c_ntq^{n/2})^{-n} &
   0 \\
   0 &
   (c_ntq^{n/2})^n
   \end{bmatrix}
=
 \begin{bmatrix}
   1 &
   0 \\
   0 &
   1
   \end{bmatrix}
   +
\frac{1}{tq^{n/2}}\begin{bmatrix}
    \beta_{n} &
   \zeta_n c_n^{2n} \\
   \zeta_{n-1}^{-1}c_n^{-2n}  &
   \xi_n
   \end{bmatrix} + O(t^{-2}),
\end{gather}
using the transformations detailed in Section \ref{derivation section}. We note that the function $f(z)$, defined in Equation \eqref{fz}, is even and does not impact on the $z^{-1}$ coefficient during the transformations. Let,
\begin{gather}\nonumber
\mathcal{W}(t) 
=
    \begin{bmatrix}
   1 &
   0 \\
   0 &
   1
   \end{bmatrix}
   +
\frac{1}{t}\begin{bmatrix}
    0 &
   B \\
   C  &
   0
   \end{bmatrix} + O(t^{-2}).
\end{gather}
In Section \ref{model RHP soln} we show that this is a valid representation of the solution $\mathcal{W}(t)$ for large $t$. Applying Lemma \ref{R close to identity theorem} we observe that $\widetilde{W}_n = \widetilde{\mathcal{W}} + O(q^n)$. Comparing coefficients of the $t^{-1}$ power in the top right term we find in the limit $n\to \infty$:
\begin{eqnarray*}
\zeta_n (-iq^{-\frac{1}{2}(\frac{n}{2}-1)})^{2n} &=& B(1+O(q^n))q^{n/2} ,\\
\zeta_n &=& B(1+O(q^n))q^{n(\frac{n}{2}-1)}q^{n/2}, \\
 &=& B(1+O(q^n))q^{n(n-1)/2}.
\end{eqnarray*}
Similarly in the bottom left term we find in the limit $n\to\infty$:
\begin{equation*}
    \zeta_{n-1}^{-1} = C(1+O(q^n))q^{-1}q^{-(n-1)(n-2)/2},
\end{equation*}
The constants $B$ and $C$ can be evaluated by observing that generalised discrete $q$-Hermite I polynomials must satisfy these asymptotic conditions. Thus, $B=2(q^2;q^2)_\infty$ and $C=q^{1+\alpha}/B$. Theorem \ref{main result 2} follows.

\end{proof}

\section{On the existence of a unique solution to the model RHP}\label{model RHP soln}
In this section we prove that there exists a unique solution to the model RHP given by Equation \eqref{new rhp}.
\subsection{Existence}
We first show that there exists a solution to the model RHP. This is achieved by determining the connection matrix \cite{Carmichael} between three solutions of a $q$-difference equation (Equation \eqref{0 difference b}), $S_A(t) h^\alpha(t)$, $S_B(t)$ and $S_C(t)/g(t)$ (defined shortly). We prove that the connection matrix is equivalent to the jump condition, Equation \eqref{new jump}, of the model RHP. Consequently, we show that the model RHP is satisfied by $S_A(t)$, $S_B(t)$ and $S_C(t)$, after appropriate transformations. To begin, let us consider the two $q$-difference equations
\begin{subequations}
\begin{gather}\label{0 difference a} 
S_A(qt) = \begin{bmatrix}
   1 &
   -t \\
   tq^{2-\alpha} &
   q^{-\alpha}-t^{2}q^{2-\alpha}
   \end{bmatrix}S_A(t),
\end{gather}
\begin{gather}\label{0 difference b} 
S_B(qt) = \begin{bmatrix}
   q^\alpha &
   -tq^\alpha \\
   tq^{2} &
   1-t^{2}q^{2}
   \end{bmatrix}S_B(t),
\end{gather}

where $S_A(t)$ and $S_B(t)$ are vectors with complex entries and $\alpha \in (-1,\infty)$ is a real parameter. Note that: $S_B(qt)/S_B(t) = q^\alpha S_A(qt)/S_A(t)$. We motivate the form of Equation \eqref{0 difference a} in Appendix \ref{q diff motivate}. Writing the entries of $S_A(t) = [S^1_A(t), S^2_A(t)]^T$ as a power series in $t$, we find by direct substitution into Equation \eqref{0 difference a} that $S^2_A(t)$ can be written in terms of the odd power series 
\[ S_A^2(t) = A_{2,1}t + A_{2,3}t^3  + A_{2,5}t^5 + ... + A_{2,j}t^{2j+1} + ... \,,\]
where
\[ A_{2,2j+1} = \frac{-q^{3-\alpha+2j}A_{2,2j-1}}{(q^{-\alpha}-q^{2j+1})(1-q^{2j})} .\]
Likewise, $S^1_A(t)$ can be written as an even power series
\[ S_A^1(t) = A_{1,0} + A_{1,2}t^2 + A_{1,4}t^4 + ... + A_{1,l}t^{2l} + ... \,,\]
where
\[ A_{1,2l} = \frac{A_{2,2l-1}}{1-q^{2l}} .\]
From the recurrence relations we can deduce that both entries of $S_A$ are entire. To see this, observe that for $0<q<1$
\[ \lim_{j\to\infty} \frac{A_{2,2j+1}}{A_{2,2j-1}} \rightarrow 0.\]
Similarly, $S_B(t) = [S^1_B(t), S^2_B(t)]^T$ can be written in terms of power series which converge everywhere. However, in this case $S^1_B(t)$ is odd and $S^2_B(t)$ is even. 

Now consider the $q$-difference equation
\begin{gather}\label{infinity difference} 
S_C(qt) = \frac{-1}{q^2t^2}\begin{bmatrix}
   q^\alpha &
   -tq^\alpha \\
   tq^{2} &
   1-t^{2}q^{2}
   \end{bmatrix}S_C(t).
\end{gather}
\end{subequations}
 Note the similarity to Equation \eqref{0 difference b}. One can readily show that there exists a solution to Equation \eqref{infinity difference}, $S_C(t) = [S^1_C(t), S^2_C(t)]^T$ which can be represented by a power series at infinity 
 \begin{eqnarray}\nonumber
    S^1_C &=& \sum_{j=0}^{\infty} C_{1,2j+1}t^{-(2j+1)},\, C_{1,1} \neq 0,\\
    S^2_C &=& \sum_{l=0}^{\infty} C_{2,2l}t^{-2l},\, C_{2,0} \neq 0,
 \end{eqnarray}
which converges everywhere (except obviously at 0). Earlier, in Equation \eqref{g} we defined the even function $g(t)$, which satisfies 
 \[ g(t)/g(qt) = -q^2t^2.\]
We also earlier defined the function $h^{\alpha}(t)$ in Equation \eqref{h new def}, which satisfies the $q$-difference equation
\[ h^\alpha(qt)/h^\alpha(t) = q^{\alpha} .\]
As both $S_C(t)/g(t)$ and $S_A(t)h^\alpha$ satisfy the $q$-difference equation \eqref{0 difference b} we conclude that
\begin{equation}\label{connection}
    S_C(t)/g(t) = P_1(t)h^\alpha(t)S_A(t) + P_2(t)S_B(t),
\end{equation}
where $P_1(qt) = P_1(t)$ and $P_2(qt) = P_2(t)$. This is the equivalent to a column of the connection matrix in \cite{Carmichael}. As $S_C(t)/g(t)$ is a meromorphic function with simple poles at $t = \pm q^{k}$ for $k \in \pm \mathbb{N}_0$ we conclude that $P_1(t)$ must be a constant and $P_2(t)$ must be either be a constant or a meromorphic function with simple poles at $t=\pm q^k$. Thus, by Corollary \ref{even coror} we conclude
\begin{equation*}
    P_2(t) = c_1h^0(z) + c_0,
\end{equation*}
where $c_1$ and $c_0$ are constants, and $h^0(z)$ is defined in Equation \eqref{h new def}. Comparing odd and even terms in Equation \eqref{connection} we conclude that $c_1=0$ and $P_2(t) = c_0$. Thus, after absorbing constants into the power series of $S_A$ and $S_B$, we have shown,
\begin{equation}\label{s jump}
    S_C(t) = g(t)h^\alpha(t)S_A(t) + g(t)S_B(t),
\end{equation}
and 
\begin{gather}\nonumber
S_C = \begin{bmatrix}
   0 \\
    1
   \end{bmatrix} + O\left(\frac{1}{|t|}\right), \; \text{as }t\rightarrow \infty.
\end{gather}
Furthermore, in Section \ref{appB} we show that $S_A(t)/g(t)$ satisfies the asymptotic condition, 
\begin{gather}\nonumber
S_A(t)/g(t) = \begin{bmatrix}
   C_0 \\
    0
   \end{bmatrix} + O\left(\frac{1}{|t|}\right), \; \text{as }t\rightarrow \infty,
\end{gather}
where $C_0$ is a constant. Hence, in summary we have proved
\begin{gather}\label{found jump}
[S_A(t)/g(t),S_C(t)] =  [S_A(t),S_B(t)]\begin{bmatrix}
   g(t)^{-1} & h^{\alpha}(t)g(t)\\
    0 & g(t) 
   \end{bmatrix}, 
\end{gather}
where $S_A(t)$ and $S_B(t)$ are analytic everywhere and $S_C(t)$ is analytic everywhere except $t=0$. Furthermore, we have proved the asymptotic behaviour
\begin{gather}\label{found decay}
\begin{bmatrix}
   C_0^{-1} & 0\\
    0 & 1 
   \end{bmatrix}[S_A(t)/g(t),S_C(t)] = I + O\left(\frac{1}{|t|}\right), \; \text{as }t\rightarrow \infty.
\end{gather}
Thus, after appropriate scaling we have found a solution which satisfies condition $(i)$ of the model RHP: by the holomorphicity of $S_A(t)$, $S_B(t)$ and $S_C(t)$, condition $(ii)$: by Equation \eqref{found jump}, condition $(iii)$: by Equation \eqref{found decay}, and  condition $(iv)$: by Equation \eqref{found jump}.

\subsection{Uniqueness}\label{uniqueness section}
Uniqueness follows by considering the determinant of a solution, $\mathcal{W}$, to the model RHP, Equation \eqref{new rhp}. By the residue condition, Equation \eqref{res cond}, we can deduce that det($\mathcal{W}$) is analytic in $\mathbb{C}\setminus \Gamma$. Furthermore, by the jump condition, Equation \eqref{new jump}, we deduce that det($\mathcal{W}$) is entire. Applying Louiville's theorem we conclude that the asymptotic condition, Equation \eqref{Yinfty decay}, implies that $\text{det}(\mathcal{W})=1$ everywhere. 

Suppose that there exists another solution, $\mathcal{W}_2$ to the model RHP. By the residue condition, Equation \eqref{res cond}, $\mathcal{W}_2 \mathcal{W}^{-1}$ is analytic everywhere. Furthermore, the jump conditions cancel and we can conclude $\mathcal{W}_2 \mathcal{W}^{-1}$ is entire.  Applying Louiville's theorem we conclude that the asymptotic condition, Equation \eqref{Yinfty decay}, implies that $\mathcal{W}_2 \mathcal{W}^{-1} = I$. Thus, $\mathcal{W}_2 = \mathcal{W}$.
\section{Conclusion}
In this paper, we determined the asymptotic behaviour of a general class of $q$-orthogonal polynomials by using the $q$-RHP setting \cite{qRHP}. The work is motivated by the methods developed by Deift \textit{et al.} \cite{Deift1999strong}, which used the RHP setting to determine the asymptotic behaviour of semi-classical orthogonal polynomials. The main results are Theorems \ref{main result 1} and \ref{main result 2} which provide more detailed asymptotic results for a large class of $q$-orthogonal polynomials than we could find in the literature.

There are a number of observations we can make from the results of this paper. In particular we proved that $\lim_{n\rightarrow \infty}\gamma_n$ is only dependent on the $\lim_{z\to 0}w(z)$, where $w(z)d_qz$ is the orthogonality measure. Furthermore, we note that the results in this paper hold even if $w(q^k)<0$ for some $k\in\mathbb{N}_0$. That is we do not require positivity of the weight function. When determining a solution to the model RHP we observed some interesting examples of $q$-RHP theory. For example we demonstrated how to explicitly determine a connection matrix between two solutions of a $q$-difference equation and, in Appendix \ref{appB}, also saw the relationship between $q$-Borel transforms and divergent power series arising in $q$-difference equations. 

An interesting avenue for future exploration would be to extend the results of the current paper to a larger class of $q$-orthogonal polynomials. Another possible direction could be determining if the theory presented in this paper can be applied to other settings not just $q$-discrete weights and orthogonal polynomials.
\appendix

\section{Properties of the solution to the model RHP}\label{appB}

In this section we prove some important properties of the solution to the RHP given by Equation \eqref{new rhp}. These results are used in Sections \ref{proofs of main res} and \ref{model RHP soln}. The section concludes with a remark which highlights an interesting connection between the present work and $q$-Stoke's phenomena. Note this is a side observation and not necessary for the proofs of the main results of this paper. As shown in Section \ref{model RHP soln} studying the solution to the RHP given by Equation \eqref{new rhp} is equivalent to studying the solutions $S_A$, $S_B$ and $S_C$ of the $q$-difference equations given in Section \ref{model RHP soln}. 

\begin{definition}
    For conciseness, we will adopt the notation $f(qz) = \bar{f}(z)$ throughout the appendix.
\end{definition}

Let 
\begin{gather*} 
[S_A,S_B] = \begin{bmatrix}
   a & b\\
   c & d
   \end{bmatrix} ,
\end{gather*}
where $S_A(t)$ and $S_B(t)$ satisfy Equations \eqref{0 difference a} and \eqref{0 difference b} respectively. From Equation \eqref{0 difference a} it can be deduced that $a$ satisfies the second order $q$-difference equation
\begin{equation*}
    \bar{\bar{a}} + \bar{a}(t^2q^{3-\alpha}-(1+q^{1-\alpha})) + q^{1-\alpha}a = 0.
\end{equation*}
Let us consider $a$ evaluated at $t = q^{-k}$, for large integer $k$, note that these locations coincide with the poles of $h^\alpha$ and the zeros of $g$ defined in Equations \eqref{h new def} and \eqref{g} respectively.  
If 
\[\bar{\bar{a}} \ll \bar{a}t^2q^{3-\alpha},\]
then this implies that $a$ grows as $a/\bar{a} = -t^2q^2(1+o(1))$. Now consider the jump condition given in Equation \eqref{s jump}
\[ agh^\alpha + bg = S_{C}^1.\] 
At the points of interest, $t = q^{-k}$, it can readily be verified by direct calculation from Equations \eqref{gqdiff} and \eqref{h alpha diff} that $gh^\alpha$ satisfies the $q$-difference equation:
\[ \overline{gh^{\alpha}}/gh^{\alpha} = -q^{-1+\alpha}t^{-2} .\]
Note that $g(t)=0$ and $h(t)^\alpha$ has a simple pole at $t = q^{-k}$ for $k\in \mathbb{N}$. If $a$ grows as $a/\bar{a}=-t^2q^2$ then $agh^\alpha$ grows as $agh^\alpha/\overline{agh^\alpha}=t^4q^{3-\alpha}(1+o(1))$, but the first entry in $S_{C}$, $S_C^1$, decays as $1/t$ for large $t$ which is a contradiction (remembering that $bg = 0$ as $g=0$ at our points of interest).
Thus, $a$ shrinks like $\bar{\bar{a}}/\bar{a} = -t^2q^{3-\alpha}(1+o(1))$. Hence,
\begin{equation}\label{bar a}
    \bar{a}/a = -t^2q^{1-\alpha}(1 + o(1)),
\end{equation}
at $t=q^{-k}$. Note that this indicates that the residue of the poles of $a/g$ are rapidly shrinking, such that
\begin{equation}\label{ag}
    \text{Res}(\bar{a}/\bar{g})/\text{Res}(a/g) = t^4q^{4-\alpha}(1 + o(1)).
\end{equation}
Furthermore, as $a$ is an entire function by Liouville's theorem there must be a $t\in \mathbb{C}$ such that 
\[\bar{\bar{a}} \ll \bar{a}t^2q^{3-\alpha}.\]
Thus, it follows that along this ray (given by iterating $t$) in the complex plane,
\[ a/\bar{a} = -t^2q^2(1+o(1)),\]
which importantly means that 
\begin{equation} \label{combined decay}
    a\bar{g}/\bar{a}g = 1+o(1)
\end{equation}
Note that $a/g$ is a meromorphic function, of the form
\begin{equation*}
    a/g = \sum_{j=0}^\infty c_jz^{-j} + \sum_{k=-\infty}^{\infty} \frac{d_k}{t^2 - q^{2k}}
\end{equation*}
with vanishingly small poles (Equation \eqref{ag}). Hence, Equation \eqref{combined decay} implies that $c_0$ is non-zero and $a/g$ approaches a non-zero constant for large $t$. A similar argument for the second entry of $S_a$ shows that 
\begin{gather*} 
S_a/g = \begin{bmatrix}
   c_0\\
   0
   \end{bmatrix} + O\left(\frac{1}{|t|}\right), \; \text{as }t\rightarrow \infty,
\end{gather*}
for $|t\pm q^{-k}|>r$, $k\in \mathbb{N}$ (with fixed $r>0$).
\begin{remark}
By solving the $q$-discrete equation satisfied by $a/g$ one can determine a divergent power series representation for $a/g$ at infinity. Taking a $q$-Borel transformation \cite{ramis2013local} we expect this series to represent the presence of a theta function `switching', analogous to Stokes phenomena. This is reflected in the vanishing poles found in Equation \eqref{ag}.
\end{remark}

\section{$q$-Hermite $q$-difference equation}\label{q diff motivate}
In this section we motivate the form of Equation \eqref{0 difference a} by studying the $q$-difference equation satisfied by discrete $q$-Hermite \rm{I} polynomials. Discrete $q$-Hermite \rm{I} polynomials satisfy the $q$-difference equation:
\begin{gather*} 
\overline{Y}_{n}(z) = \begin{bmatrix}
   1 &
   z(q^{n}-1) \\
   zq^{2-n} &
   1-z^{2}q^{2-n}
   \end{bmatrix}Y_{n}(z),
\end{gather*}
where 
\begin{gather*}
Y_{n}(z) = \begin{bmatrix}
   P_n\\
  P_{n-1}
   \end{bmatrix}.
\end{gather*}
After making the transformation $tq^{n/2} = z$ we find
\begin{gather*} 
\overline{Y}_{n}(t) = \begin{bmatrix}
   1 &
   tq^{n/2}(q^{n}-1) \\
   tq^{2-n/2} &
   1-t^{2}q^{2}
   \end{bmatrix}Y_{n}(t).
\end{gather*}
After taking the linear transformation 
\begin{gather*} 
S_n(t) = \begin{bmatrix}
   1 &
   0 \\
   0 &
   q^{n/2}
   \end{bmatrix}Y_{n}(t).
\end{gather*}
We find that $S_n$ satisfies the $q$-difference equation
\begin{gather}
\overline{S}_{n}(t) = \begin{bmatrix}
   1 &
   t(q^n-1) \\
   tq^{2} &
   1-t^{2}q^{2}
   \end{bmatrix}S_n(t).
\end{gather}
Taking the limit $n\rightarrow \infty$ the $q$-difference equation for $S_n$ becomes,
\begin{gather}
\overline{S}_{\infty}(t) = \begin{bmatrix}
   1 &
   -t \\
   tq^{2} &
   1-t^{2}q^{2}
   \end{bmatrix}S_\infty(t).
\end{gather}
We would expect that the solution to this difference equation solves the model RHP for the case $\alpha = 0$, and indeed that is what we find. 

\section{Functions invariant under $z\mapsto qz$ }\label{Properties of hq section}
In this section we prove some properties about meromorphic functions with simple poles which are invariant under the transformation $z\mapsto qz$, i.e. $C(qz) = C(z)$.

\begin{lemma}\label{lemma 5.6}
Let $C(z)$ be a function defined on $\mathbb{C}\setminus 0$, which is analytic everywhere except for simple poles at $q^k$ for $k \in \mathbb{Z}$. Then, $C(qz) \neq C(z)$. 
\end{lemma}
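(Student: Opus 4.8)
The plan is to reinterpret the hypothesis $C(qz)=C(z)$ as saying that $C$ descends to a meromorphic function on the quotient torus $\mathbb{C}^{*}/q^{\mathbb{Z}}$, on which the entire pole orbit $\{q^{k}:k\in\mathbb{Z}\}$ collapses to a single point. Thus $C$ would be the multiplicative analogue of an elliptic function possessing exactly one simple pole per period, and the classical obstruction—that the residues of a meromorphic differential on a compact Riemann surface sum to zero—should force the residue there to vanish, contradicting the assumption that the pole at $z=1$ is genuine. I would make this rigorous by an elementary contour argument on a fundamental annulus, so as to stay within the $q$-difference framework of the paper and avoid invoking Riemann surface theory.

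Concretely, I would argue by contradiction: suppose $C(qz)=C(z)$, and let $r_{0}=\text{Res}_{z=1}C(z)$, which is nonzero since $C$ has a simple pole at $q^{0}=1$. Because $d(qz)/(qz)=dz/z$ and $C$ is invariant, the differential $C(z)\,dz/z$ is invariant under $z\mapsto qz$. I would then choose the fundamental annulus
\[ \mathcal{A}=\{z:\,q^{1/2}\le |z|\le q^{-1/2}\}, \]
whose bounding circles $|z|=q^{\pm 1/2}$ pass through no pole (since $q^{k}=q^{\pm1/2}$ forces $k=\pm1/2\notin\mathbb{Z}$), and which contains in its interior exactly one pole, namely $z=1$. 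Applying the residue theorem to the positively oriented boundary gives
\[ \oint_{\partial\mathcal{A}}C(z)\,\frac{dz}{z}=2\pi i\,\text{Res}_{z=1}\frac{C(z)}{z}=2\pi i\,r_{0}. \]

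To finish, I would evaluate the same boundary integral directly. Writing $\partial\mathcal{A}$ as the outer circle $|z|=q^{-1/2}$ minus the inner circle $|z|=q^{1/2}$ (both counterclockwise) and substituting $z=qw$ in the inner integral, the relations $dz/z=dw/w$ and $C(qw)=C(w)$ give
\[ \oint_{|z|=q^{1/2}}C(z)\,\frac{dz}{z}=\oint_{|w|=q^{-1/2}}C(w)\,\frac{dw}{w}, \]
so the two boundary contributions cancel and the integral is $0$. Comparing with the residue evaluation yields $r_{0}=0$, contradicting $r_{0}\ne 0$; hence no such $q$-invariant $C$ can exist.

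The step I expect to require the most care is the bookkeeping of the fundamental domain: verifying that the circles $|z|=q^{\pm1/2}$ avoid every pole, that the annulus of modulus ratio $q^{-1}$ contains precisely the single orbit representative $z=1$, and that the orientations together with the substitution $z=qw$ combine to give an exact cancellation. Everything else is routine; in particular the argument uses only the single pole at $z=1$ together with $q$-invariance, so the full hypothesis of simple poles at all $q^{k}$ is more than enough, and the same reasoning detects the contradiction from any one orbit representative.
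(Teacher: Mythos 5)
Your proof is correct, and it takes a genuinely different route from the paper's. The paper argues by \emph{cancelling} the poles: it multiplies $C$ by a theta-type product $G(z)=(-z,-qz^{-1};q)_\infty$ vanishing on the pole lattice, so that $F=CG$ is analytic on $\mathbb{C}\setminus 0$ and satisfies the first-order equation $F(qz)=z^{-1}F(z)$; comparing Laurent coefficients forces $F=c_0G$, hence $C=c_0$ is constant, contradicting the presence of poles. You instead run the multiplicative analogue of the classical ``residues of an elliptic function sum to zero'' argument: since $C(z)\,dz/z$ is invariant under $z\mapsto qz$, the boundary integrals over $|z|=q^{\pm1/2}$ cancel after the substitution $z=qw$ (orientation is preserved because $q>0$), so the unique residue inside the fundamental annulus, $\mathrm{Res}_{z=1}\,C(z)/z=r_0$, must vanish --- contradicting simplicity of the pole at $z=1$, since a simple pole has nonzero residue. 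All your bookkeeping checks out: the circles $|z|=q^{\pm1/2}$ miss every $q^k$, and only $k=0$ lands in the annulus. As for what each approach buys: yours is more elementary (only the residue theorem) and isolates exactly why the statement is true --- one simple pole per $q$-orbit cannot have residue sum zero --- which also makes transparent why the hypothesis genuinely cannot cover the case of poles at $\pm q^k$ treated in Corollary \ref{even coror}: there the residues of $C(z)\,dz/z$ at $z=1$ and $z=-1$ can cancel, which is precisely how $h^0$ evades the obstruction. The paper's method yields more structure in return: it classifies all $q$-invariant functions with at most simple poles on the lattice (they are constants), and its Laurent-coefficient technique is reused to finish Corollary \ref{even coror}, keeping the whole appendix within the $q$-difference/theta-function toolkit. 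Incidentally, your route sidesteps a small slip in the paper's proof: as defined, $G$ vanishes on the lattice $-q^{\mathbb{Z}}$ rather than $q^{\mathbb{Z}}$ (the signs in the Pochhammer symbols need adjusting), whereas your contour argument involves no such choice.
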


\begin{proof}
We prove the result by contradiction. Assume $C(qz) = C(z)$. Define
\[ G(z) = (-z,-qz^{-1};q)_\infty .\]
By direct calculation one can show $G(qz) = z^{-1}G(z)$. Furthermore, by definition, $G(z)$ is zero on the $q$-lattice $q^k$, $k \in \mathbb{Z}$. Let
\[ F(z) = C(z)G(z),\]
then it follows $F(z)$ is analytic in $\mathbb{C}\setminus 0$ and satisfies the difference equation
\begin{equation}\label{cF diff eq}
    F(qz) = z^{-1}F(z) .
\end{equation}
As $F(z)$ is analytic in $\mathbb{C}\setminus 0$ we can write $F(z)$ as the Laurent series
\[ F(z) = \sum_{k=-\infty}^\infty F_kz^k.\]
Comparing the coefficients of $z$ in Equation \eqref{cF diff eq}, one can readily determine
\begin{equation}\label{fk eq}
    F_k = c_0q^{k(k-1)/2}.
\end{equation} 
However, there is only one solution with $z$ coefficients given by Equation \eqref{fk eq} (up to scaling by a constant) and it follows that $F(z) = c_0G(z)$. Thus, if $C(qz) = C(z)$, then $C(z) = c_0$, and $C(z)$ has no poles. 
\end{proof}
\begin{corollary}\label{even coror}
Let $C(z)$ be a function defined on $\mathbb{C}\setminus 0$, which is analytic everywhere except for simple poles at $\pm q^k$ for $k \in \mathbb{Z}$. Furthermore, suppose $C(z)$ satisfies $C(qz)=C(z)$. Then, $C(z) = c_1h^0(z) + c_0$, where $c_0$ and $c_1$ are constants and $h^0(z)$ is as defined in Equation \ref{h new def}.
\end{corollary}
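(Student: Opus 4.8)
The plan is to exhibit $h^0(z)$ as a \emph{model} function carrying the entire pole structure, and then to show that after subtracting an appropriate multiple of it the remainder has no poles at all and is therefore constant. The key structural facts I would use are that $h^0(z)$, by Equation \eqref{h alpha diff} with $\alpha = 0$, is itself invariant under $z \mapsto qz$, and that from its definition \eqref{h new def} it has simple poles precisely at $\pm q^k$, $k \in \mathbb Z$, with $\operatorname{Res}_{z=q^k} h^0 = \operatorname{Res}_{z=-q^k} h^0 = q^k$.

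First I would analyse how the invariance $C(qz) = C(z)$ constrains the residues of $C$. If $C$ has a simple pole at $z = a$ with residue $\rho$, then $z \mapsto C(qz)$ has a simple pole at $z = a/q$ with residue $\rho/q$; comparing with $C(z)$ forces $\operatorname{Res}_{z = a/q} C = \tfrac1q \operatorname{Res}_{z=a} C$. Iterating along the positive lattice gives $\operatorname{Res}_{z=q^k} C = q^k r_0$ with $r_0 := \operatorname{Res}_{z=1} C$, and similarly $\operatorname{Res}_{z=-q^k} C = q^k s_0$ with $s_0 := \operatorname{Res}_{z=-1} C$. Since $h^0$ obeys the same scaling with $r_0 = s_0 = 1$, setting $c_1 := r_0$ yields $\operatorname{Res}_{z=q^k}(C - c_1 h^0) = q^k(r_0 - c_1) = 0$ for every $k$, so the function $D(z) := C(z) - c_1 h^0(z)$ is analytic on the entire positive lattice. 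Note that invariance does the bookkeeping automatically: matching the single residue at $z = 1$ annihilates all poles at $q^k$ simultaneously.

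It remains to handle the negative lattice, which is where I expect the only real subtlety. The function $D$ is still invariant under $z \mapsto qz$ (being a difference of two invariant functions) and now has at most simple poles at $-q^k$, $k \in \mathbb Z$. I would reduce this to Lemma \ref{lemma 5.6} by reflection: setting $E(z) := D(-z)$ gives a function on $\mathbb C \setminus 0$ with at most simple poles at $q^k$, and $E(qz) = D(-qz) = D(q(-z)) = D(-z) = E(z)$, so $E$ is invariant under $z \mapsto qz$. Lemma \ref{lemma 5.6} then forces $E$, and hence $D$, to be a constant $c_0$ with no poles; in particular this retroactively forces $s_0 = r_0$. We conclude $C(z) = c_1 h^0(z) + c_0$, as claimed. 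The main obstacle is therefore not the computation but recognising that the two halves of the lattice must be treated separately, with the negative half routed through Lemma \ref{lemma 5.6} via the reflection $z \mapsto -z$.
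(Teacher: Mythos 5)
Your proof is correct and is essentially the paper's own argument: both subtract $c_1 h^0(z)$ with $c_1$ fixed by matching a single residue (invariance under $z\mapsto qz$ propagating that match across the whole lattice), then invoke Lemma \ref{lemma 5.6} to kill the remaining poles and conclude constancy from invariance. The only differences are cosmetic: you normalise at $z=+1$, so the leftover poles of $D$ sit on the negative lattice and you need the reflection $E(z)=D(-z)$ before Lemma \ref{lemma 5.6} applies, whereas the paper matches the residue at $z=-1$ and applies the lemma directly; also note that the lemma's statement only excludes the poles, so your final ``hence $E$, and hence $D$, is constant'' tacitly uses the Laurent-series step (an invariant function analytic on $\mathbb{C}\setminus 0$ has $d_jq^j=d_j$, so $D=d_0$) that the paper writes out explicitly.
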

\begin{proof}
As both $C(z)$ and $h^0(z)$ have simple poles at $z=-1$ we conclude that there exists a $c_1\neq 0$ such that
\[ \mathrm{Res}(C(-1)) = c_1\mathrm{Res}(h^0(-1)).\]
Furthermore, both $C(z)$ and $h^0(z)$ are invariant under the transformation $z\to qz$, hence for all $k\in\mathbb{Z}$
\[ \mathrm{Res}(C(-q^k)) = c_1\mathrm{Res}(h^0(-q^k)).\]
Thus, the function 
\[D(z) = C(z)-c_1h^0(z),\]
is meromorphic in $\mathbb{C}\setminus 0$, with possible simple poles at $q^k$ for $k \in \mathbb{Z}$, and satisfies $D(qz) = D(z)$. However, by Lemma \ref{lemma 5.6}, $D(z)$ can not have simple poles at $q^k$ for $k \in \mathbb{Z}$. Hence, $D(z)$ is analytic in $\mathbb{C}\setminus 0$ and it follows that $D(z)$ can be written as a convergent Laurent series. Thus,
\[ D(z) = \sum_{j=-\infty}^\infty d_jz^{j} .\]
Substituting this into the $q$-difference equation $D(qz) = D(z)$, we conclude $D(z) = d_0$ $(=c_0)$ and Corollary \ref{even coror} follows immediately.
\end{proof}

\section*{Acknowledgment}
The authors would like to thank Dr. Pieter Roffelsen for helpful discussions during the inception of the paper.

\section*{Funding}
Nalini Joshi's research was supported by Australian Research Council Discovery Projects \#DP200100210 and \#DP210100129. Tomas Lasic Latimer's research was supported the Australian Government Research Training Program and by the University of Sydney Postgraduate Research Supplementary Scholarship in Integrable Systems.

\printbibliography

\end{document}